\theoremstyle{definition} %%% for statements in roman typeface
\theoremstyle{plain}      %%% for statements in italic typeface
\newtheorem{thm}{Theorem}[section]
\newtheorem{Prop}[thm]{Proposition}
\newtheorem{lemma}[thm]{Lemma}
\newtheorem{co}[thm]{Corollary}
\theoremstyle{definition}
\newtheorem{definition}[thm]{Definition}
\newcommand{\R}{\mathbb{R}}
\let\cal\mathcal
\newcommand{\br}{{\mathbb R}}
\newcommand{\ra}{\rightarrow}
\let \cal \mathcal
\begin{document}
\title{Convex real projective structures and Hilbert metrics}
 
\author{Inkang Kim\thanks{The first author gratefully acknowledges the partial support
of NRF grant  (2010-0024171).
} {} and Athanase Papadopoulos\thanks{The second author   is supported by the French ANR project FINSLER.
} }

\address{School of Mathematics,
KIAS\\ Heogiro 85, Dongdaemun-gu Seoul, 130-722, Republic of Korea\\
email:\,\tt{inkang@kias.re.kr}
\\
{}
\\
Institut de Recherche Math\'ematique Avanc\'ee
\\
Universit\'e de Strasbourg et CNRS
\\
7 rue Ren\'e Descartes
\\
67084 Strasbourg Cedex, France
\\
email:\,\tt{papadop@math.unistra.fr}}

\maketitle

\begin{abstract} 
We review some basic concepts related to  convex real projective structures from the differential geometry point of view. We start by recalling a Riemannian metric which originates in the study of affine spheres using the Blaschke connection (work of Calabi and of Cheng-Yau) mentioning its relation with the Hilbert metric. We then survey some of the deformation theory of convex real projective structures on surfaces. We describe in particular how the set of (Hilbert) lengths of simple closed curves is used in a parametrization of the deformation space in analogy with the classical Fenchel-Nielsen parameters of Teichm\"uller space (work of Goldman). We then mention parameters of this deformation space that arise in the work of Hitchin on the character variety of representations of the fundamental group of the surface in $\mathrm{SL}(3,\mathbb{R})$. In this character variety, the component of the character variety that corresponds to projective structures is identified with the vector space of pairs of holomorphic quadratic and cubic differentials over a fixed Riemann surface. Labourie and Loftin (independently) obtained parameter spaces that use the cubic differentials and affine spheres. We then display some similarities and differences between Hilbert geometry and  hyperbolic geometry using geodesic currents and topological entropy. Finally, we discuss geodesic flows associated to Hilbert metrics and compactifications of spaces of convex real projective structures on surfaces. This makes another analogy with works done on the Teichm\"uller space of the surface.
\end{abstract}

\begin{classification}
51M10, 57S25.
\end{classification}

\begin{keywords}
Convex real projective structure, geodesic flow, deformation space, hyperbolic structure, geodesic current, topological entropy, volume entropy, Busemann cocycle, Patterson-Sullivan measure.
\end{keywords}

\tableofcontents

\section{Introduction}

In what follows,  $\mathbb{RP}^n$ is the  $n$-dimensional real projective space,  that is, the set of lines through the origin in $\mathbb{R}^{n+1}$ and $\mathbb{A}^n$ the $n$-dimensional affine space, considered as the complement of a hyperplane in $\mathbb{RP}^n$. A subset $\Omega$ of $\mathbb{A}^n$ is \emph{convex} if its intersection with each affine line is connected.

Let $\Omega$ be an open convex subset of $\mathbb{A}^n$.    Then $\Omega$ is equipped with a canonical metric, called the Hilbert metric, whose definition we recall in \S \ref{s:preliminaries} below. This metric is invariant by the group of projective transformations of  $\mathbb{RP}^n$ which preserve $\Omega$. Several interesting phenomena were discovered and several good questions arose recently concerning that metric.  It is non-Riemannian except in the case where $\Omega$ is an ellipsoid, but it shares several properties with the hyperbolic metric (that is, a Riemannian metric of constant negative curvature),\index{hyperbolic metric}\index{metric!hyperbolic} especially if $\Omega$ is strictly convex. There are also many differences. In the next sections, we will highlight some of these analogies and differences.

We start by recalling very classical questions concerning convex sets equipped with their Hilbert geometry. From the observation that the boundary of the unit disk is a smooth circle, it was natural to ask whether there exist convex domains in the projective plane with less regular boundary which admit  compact quotients by discrete subgroups of the automorphism group of the domain. This question is natural because the unit disk admits such quotients. This existence question was already addressed by Ehresmann in the 1930s \cite{Ehresmann},\index{Ehresmann (Charles)} and it was answered affirmatively thirty years later by Kac and Vinberg \cite{KV}. In modern language, a convex domain in the projective plane which admits a cocompact action is termed \emph{divisible}.\index{divisible} An ellipse in $\mathbb{RP}^2$ or an ellipsoid $\mathbb{RP}^3$ are well-known examples of convex domains. Their compact quotients are the 2- and 3-dimensional compact hyperbolic manifolds. The theory of hyperbolic manifolds (their construction, classification, their deformation and moduli spaces and their rigidity properties) was born in the works of Klein and Poincar\'e in the 1880s. This theory rapidly evolved into one of the most beautiful geometric theories that were developed during the twentieth century. Important questions concerning more general convex projective manifolds (quotients of divisible convex domains) were addressed by Benz\'ecri\index{Benz\'ecri (Jean-Paul)} in his thesis \cite{Ben} (1960). This thesis contains some foundational work on the subject. To find conditions under which a convex domain is divisible is a difficult matter, as is the general question of existence of lattices in semisimple Lie groups. In most of the cases, the boundary of such a domain is nowhere analytic. Benz\'ecri proved that if the boundary is $C^2$, then the original convex set is an ellipsoid, which makes it identified with the hyperbolic space, more precisely, with the Cayley-Klein-Beltrami\index{Cayley-Klein-Beltrami model} model of that space. It also follows from the work of Benz\'ecri and from later works that the boundary $\partial \Omega$ of a divisible convex domain $\Omega$ is either a conic (in which case the projective structure arises from a hyperbolic structure) or this boundary is nowhere $C^{1+\epsilon}$ for some $\epsilon >0$. Explicit examples of quotients of projective manifolds obtained by reflections along convex polyhedra in the sphere were obtained by Vinberg in the early 1970s. These constructions may be considered as projective analogues of Poincar\'e's examples of hyperbolic manifolds that use reflections along convex polyhedra in hyperbolic space.

A \emph{convex real projective manifold}\index{convex projective structure}\index{projective structure!convex} is the quotient of a convex set $\Omega\subset\mathbb{RP}^n$ by a discrete group of projective transformations, and the structure is termed \emph{strictly convex}\index{strictly convex projective structure}\index{projective structure!strictly convex} if $\Omega$ is strictly convex. The moduli space of convex projective structures generalizes the Teichm\"uller spaces of hyperbolic surfaces. There are many interesting recent developments on convex projective manifolds and their deformations. Hitchin \cite{Hi} discovered a component in the space of representations of fundamental groups of surfaces in $\mathrm{SL}(n,\mathbb{R})$, for every $n\geq 2$,  which contains Teichm\"uller space, and whose elements have several properties in common with Teichm\"uller space. He called these  components Teichm\"uller components,\index{Teichm\"uller component} and today such a component is called a Hitchin component.\index{Hitchin component} In the case $n=3$, such components consist of real projective structures on surfaces. Johnson and Millson \cite{JM} showed that there are non-trivial continuous deformations of higher-dimensional hyperbolic structures through strictly convex projective structures.  Conversely, one might expect that if a compact manifold admits a strictly convex projective structure, then it is a deformation of a hyperbolic structure. But Benoist \cite{Be1} constructed in dimension four an example of a manifold which admits a strictly convex real projective structure but no hyperbolic structure. Later, Kapovich \cite{Ka} generalized the method to show that such examples exist in any dimension $\geq 4$. 

In this paper, we study some relations between hyperbolic geometry and differential  projective geometry. Some of the natural questions that appear in this setting are:
\begin{itemize}

\item To what extent does Hilbert geometry generalize hyperbolic geometry?
 
\item  What are the relations and the common properties between spaces of deformations of convex projective structures and Teichm\"uller spaces?
 
\item Are there compactifications of deformation spaces of convex  projective manifolds that are analogous to compactifications of Teichm\"uller spaces?
 
\item  How does the hyperbolic behavior of geodesic flows of Hilbert manifolds generalize the hyperbolic behavior (and in particular the Anosov theory) of negatively curved Riemannian manifolds?  
 \end{itemize}
 
  The exposition is by no means complete and thorough. In some cases, we just record the results together with some references known to us, hoping to arouse the reader's interest in these questions.

We would like to thank Sarah Bray, Bill Goldman, Ludovic Marquis and Marc Troyanov for valuable comments and corrections on a preliminary version of this paper.

\section{Preliminaries}\label{s:preliminaries}
We recall some basic notions of projective geometry on manifolds.

The automorphism group of $\mathbb{RP}^n$, denoted by $\mathrm{PGL}(n,\mathbb{R})$ -- the group of projective transformations -- is the  quotient of the linear group $\mathrm{GL}(n+1,\mathbb{R})$ by the action of the nonzero scalar transformations. It is sometimes convenient to work on the sphere $\mathbb{S}^n$, which is a double-sheeted cover of $\mathbb{RP}^n$. The sphere is equipped with the induced projective structure whose automorphism group (also called the projective automorphism group) is the group  $\mathrm{SL}^\pm(n+1,\mathbb{R})$ of real $(n+1)\times (n+1)$ matrices of determinant $\pm1$. This is also the group of volume-preserving affine transformations of $\mathbb{R}^{n+1}$.

The \emph{projective lines}\index{projective line} (or, more simply, the \emph{lines}) in $\mathbb{RP}^n$  are the projections of the great circles of the sphere $\mathbb{S}^n$ by the canonical map $\mathbb{S}^n\to \mathbb{RP}^n$. This set of lines is preserved by the group of projective transformations. The projective lines play simultaneously the role of \emph{lines} of a geometry defined in the axiomatic sense, and the role of geodesics for the Hilbert metric, as we shall recall  below.
  
Let $\Omega$ be a subset of $\mathbb{RP}^n$. We say that $\Omega$ is \emph{convex}\index{convex set!projective space} if the intersection of $\Omega$ with any line of $\mathbb{RP}^n$ is connected. We say that $\Omega$ is a \emph{properly convex}\index{properly convex set}\index{convex set!properly} (or a \emph{proper convex})\index{proper convex} subset of $\mathbb{RP}^n$ if it is convex and contained in the complement of a hyperplane. 

An \emph{affine patch}\index{affine patch}\index{patch!affine} in $\mathbb{RP}^n$ is the complement of a hyperplane. An affine patch can be seen as the affine $n$-dimensional space $\mathbb{A}^n$. 

Let $\Omega$ be a properly convex subset of $\mathbb{RP}^n$.
The convexity of $\Omega$ as  a subset of  projective space (that is, the intersection of $\Omega$ with each projectve line is connected) is equivalent to the convexity of $\Omega$ as a subset of affine space (that is, the intersection of $\Omega$ with each affine line is connected).
 
Let $\Omega$ be a properly convex subset of $\mathbb{RP}^n$ contained in an affine patch $A$. We equip $A$ with a Euclidean norm 
$| \cdot |$. We shall also denote by $| \cdot |$ the associated Euclidean distance. The space $A$ plays now the role of a Euclidean space $\mathbb{R}^n$ in which $\Omega$ sits. We present a brief summary of some notions associated to the Hilbert geometry of $\Omega$ that we shall use in this paper.

For $x\neq y\in \Omega$, let $p,q$ be the
intersection points of the line $xy$ with $\partial\Omega$ such that $p,x,y,q$ are in this order. The
\emph{Hilbert distance}\index{Hilbert metric} between $x$ and $y$ is defined by
$$d_\Omega(x,y)=\frac{1}{2}\log \frac{|p-y||q-x|}{|p-x||q-y|}$$. The value of $d_\Omega(x,y)$ does not depend on the choice of the Euclidean metric $\vert . \vert$ on $\mathbb{A}^n$. For $x=y$, we set $d_\Omega(x,y)=0$. 

This metric coincides with the familiar hyperbolic metric is the case where $\partial\Omega$ is an ellipse (in dimension 2) or ellipsoid (in dimension $\geq 3$). This metric of the ellipse or ellipsoid is the so-called ``projective model", or Cayley-Klein-Beltrami model of hyperbolic geometry.\index{Cayley-Klein-Beltrami model}

 The Hilbert metric is Finsler, and it is not Riemannian unless $\partial\Omega$ is an ellipsoid. The Finsler norm is given, for $x\in\Omega$ and a
vector $v$ in the tangent space of $\Omega$ at $x$, by
\begin{equation}\label{norm} 
 ||v||_x = \frac{1}{2} \left(\frac{1}{|x-p^-|}+\frac{1}{|x-p^+|}\right)|v|
\end{equation} 
 where $p^\pm$
are the intersection points with $\partial\Omega$ of the oriented line in $\Omega$ defined by the vector $v$ based at $x$ and where $| \cdot |$ is our
chosen norm on the affine patch.  This norm is reversible,\index{reversible norm} that is, it satisfies $||v||_x=||-v||_x$.
 The \emph{Finsler metric}\index{Finsler metric} associated to this Finsler nom is the metric on $\Omega$ defined by taking the distance between two arbitrary points to be the infimum of the lengths of $C^1$ paths  joining them, where the length of a path is defined by integrating the norms of tangent vectors using Formula (\ref{norm}). It is an easy exercise to show that this Finsler metric on $\Omega$ is the Hilbert metric.\index{Hilbert metric!Finsler norm} 
  
The \emph{(projective) automorphism}\index{convex set!projective automorphism group} group of $\Omega$ is the group of projective transformations of $\mathbb{R}^{n+1}$ that preserve $\Omega$. In other words, we have
\[\mathrm{Aut}(\Omega)=\{f\in \mathrm{PGL}(n,\mathbb{R}) \ \vert\  f(\Omega)=\Omega)\}.\]
The Hilbert metric of $\Omega$ is invariant by the group $\mathrm{Aut}(\Omega)$. A recent result by Walsh described in Chapter 5 of this volume (\cite{W}) says that the isometry group $\mathrm{Aut}(\Omega,d_\Omega)$ of the Hilbert metric $d_\Omega$ coincides with the group $\mathrm{Aut}(\Omega)$ except in a few special cases where $\mathrm{Aut}(\Omega,d_\Omega)$ is an order-two extension of $\mathrm{Aut}(\Omega)$.

 We now associate a Borel measure on $\Omega$ using the Finsler structure. The construction is analogous to the one used in Riemannian geometry. We follow the presentation of Marquis in \cite{Marquis-These}.

 Let $\mathrm {Vol}$ be a Lebesgue measure on $A$ normalized by
$\mathrm{Vol}(\{v\in A:|v|<1\})=1$. We define a measure on $\Omega$ by setting, for each Borel subset $\cal A\subset
\Omega\subset A$, 
$$\mu_\Omega(\cal A)=\int_{\cal A} \frac{d\mathrm {Vol}(x)}{\mathrm {Vol}(B_x(1))},$$
where $B_x(1)=\{v\in T_x\Omega: ||v||_x<1\}$, the norm $||.||$ being the one given by (\ref{norm}). 

This measure turns
out to be the Hausdorff measure induced by the Hilbert metric
\cite{BBI}. In particular, it is independent of the choice of the Euclidean norm of $A$. It is called the \emph{Busemann volume},\index{Busemann volume} and also the \emph{Hilbert volume}.\index{Hilbert volume} It is invariant by the action of $\mathrm{Aut}(\Omega)$.

From the definition, we have the following, for any two convex domains
$\Omega_1\subset \Omega_2$ of $\mathbb{R}^n$:
\begin{enumerate}
\item  $||v||_x^{\Omega_2}\leq ||v||_x^{\Omega_1}$ for every tangent vector $v$ based at a point $x$ in $\Omega_1$;
\item $d_{\Omega_2}(x,y)\leq d_{\Omega_1}(x,y)$ for every $x$ and $y$ in $\Omega_1$;
\item  $B_x^{\Omega_1}(1)\subset B_x^{\Omega_2}(1)$ for every $x$ in $\Omega_1$;
\item  $\mu_{\Omega_2}(\cal A)\leq \mu_{\Omega_1}(\cal A)$ for any
Borel set $\cal A$ in $\Omega_1$.
\end{enumerate}

For more details on the notion of measure associated to a general Finsler metric, we refer the reader to \cite{AT} and  \cite{BBI}. 

We now consider projective structures on manifolds.

\begin{definition}\label{d:structure}
A  \emph{real projective structure}\index{projective structure} on an $n$-dimensional differentiable manifold  $M$ is a maximal atlas with values in the $n$-dimensional projective space $\mathbb{RP}^n$ whose transition
functions are restrictions of projective automorphisms of $\mathbb{RP}^n$. 
\end{definition}
Equipped with such a structure, the manifold $M$ becomes a \emph{real projective manifold}. 
We shall sometimes delete the adjective ``real" since all the projective structures we consider in this paper are real.\index{projective manifold} An \emph{isomorphism}\index{projective manifold!isomorphism} between two $n$-dimensional projective manifolds is a homeomorphism between the underlying manifolds which, in each projective chart, is locally the restriction of  an element of the projective transformation group of $\mathbb{RP}^n$. 

To each real projective structure is associated, by the general theory of $G$-structures of Ehresmann \cite{Ehresmann} \index{Ehresmann (Charles)} a developing map and a holonomy homomorphism. The developing map arises from the attempt to define a global chart for the structure. It is constructed by starting with a coordinate chart around a given point and trying to extend it by analytic continuation. There is an obstruction for doing so if the manifold is not simply connected, and in fact, the only obstruction is the fundamental group of $M$. (When we come back to the point we started with, by analytically continuing along a nontrivial path, we generally end up with a different germ of a chart into $\mathbb{RP}^n$ than the one we started with.) Thus, instead of obtaining a map from our manifold $M$ to $\mathbb{RP}^n$, we end up with a map
\[ \mathrm{dev}: \tilde{M}\to \mathbb{RP}^n,
\] 
where  $\tilde{M}$ is the universal covering of $M$. The  holonomy homomorphism is an injective homomorphism 
\[hol: \pi_1(M)\to\mathrm{PSL}(n+1,\mathbb{R})\]
satisfying 
\[
 \mathrm{dev}\circ \gamma = \mathrm{hol}(\gamma)\circ \mathrm{dev}.
\]

This developing map is completely determined by its restriction to an arbitrary open subset of $M$. When we change the initial coordinate chart in the above construction, the resulting map differs from the previous one by post-composition by an element of $\mathrm{PGL}(n,\mathbb{R})$. The result is that although the developing map and the holonomy homomorphism depend on some choices (namely, the choice of the initial chart), there are nice transformation formulae relating the various maps and homomorphisms obtained. In particular, the holonomy homomorphism is well defined up to a conjugation by an element of $\mathrm{PGL}(n,\mathbb{R})$. 

In the case where the developing map is a homeomorphism onto its image, we can write $M=\mathrm{dev}(\tilde{M})/\mathrm{hol}(\pi_1(M))$. We refer the reader to the paper by Ehresmann \cite{Ehresmann} for the general theory of developing maps and holonomy representations associated to geometric structures. Benz\'ecri \cite{Ben},\index{Benz\'ecri (Jean-Paul)} Kuiper \cite{Kuiper} and subsequently Koszul \cite{Koszul} considered thoroughly the case of real projective structures.
Thurston, starting in the 1970s,  included this theory as an important part of the general theory of geometrization of low-dimensional manifolds, see  \cite{Thurston}. Goldman \cite{Goldman}, motivated by ideas of Thurston, developed the theory of moduli spaces of projective structures on surfaces. 
In their paper \cite{ST}, Sullivan  and Thurston give an example of a projective structure on the torus whose developing map is not a covering of projective space. 

Talking about the sources of differential projective geometry, one has also mention the work of Chern on the Gauss-Bonnet formula and characteristic classes that motivated several later works \cite{Chern}. 

In what follows, we shall use the more restrictive notion of \emph{convex}\index{projective structure!convex} real projective structure. This is the case where the developing map sends homeomorphically the universal cover $\tilde{M}$ of $M$ onto a convex subset of some $\mathbb{R}^n$ sitting in $\mathbb{RP}^n$ as the complement of an affine hyperplane.  It turns out that such a structure is more manageable than a general real projective structure, and in particular, one can introduce the Hilbert metric into the playground. There is a lot of classical and more recent activity on convex projective structures and their deformation spaces.

 In what follows, we shall use the terminology \emph{convex} to mean properly convex. There is a characterization of such a structure, which we can also take as a definition:

\begin{definition}\label{d:cstructure} A \emph{convex real projective manifold}\index{projective manifold!convex}\index{convex real projective manifold} $M$ is an object of the form $\Omega/\Gamma$ where $\Omega$ is a convex domain in $\mathbb{RP}^n$ containing no projective line and $\Gamma$ a discrete group of the projective automorphism group $\mathrm{Aut}(\Omega)$ of $\Omega$.
We call $M$ \emph{strictly convex}\index{projective manifold!convex!strictly} if $\Omega$ is strictly convex i.e., if $\Omega$ is convex and $\partial \Omega$ does not contain any line segment. 
\end{definition}

The relation between Definitions \ref{d:structure} and \ref{d:cstructure} is made through the natural identifications between $\Omega$ and the universal cover of $M$ and between $\Gamma$ and the fundamental group of $M$.

In dimension two, we have more knowledge about convex projective structures, and in particular there is the following in Goldman \cite{Goldman} (Proposition 3.1):
\begin{Prop} Let $M$ be a projective structure on a surface. Then, the following three properties are equivalent:
\begin{enumerate}
\item $M$ is projectively equivalent to a quotient $\Omega/\Gamma$ where $\Omega$ is a convex open subset of $\mathbb{RP} ^2$ and $\Gamma$ a discrete group of projective transformations of $\mathbb{RP} ^2$ which acts freely and properly discontinuously;
\item the developing map $\mathrm{dev}:\tilde{M}\to \mathbb{RP} ^2$ is a diffeomorphism onto a convex subset of $\mathbb{RP} ^2$;
\item every path in $M$ is homotopic relative endpoints to a geodesic path (that is, a path which in coordinate charts is contained in a line of $\mathbb{RP} ^2$).
\end{enumerate}
\end{Prop}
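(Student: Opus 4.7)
The plan is to establish the three equivalences in the order $(1)\Leftrightarrow(2)$, then $(2)\Rightarrow(3)$, and finally the harder direction $(3)\Rightarrow(2)$.

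For $(1)\Leftrightarrow(2)$: If $M=\Omega/\Gamma$ with $\Omega$ a properly convex open subset of $\mathbb{RP}^2$, then $\Omega$, being a convex open set in an affine patch, is contractible, so one may identify $\Omega$ with the universal cover $\tilde M$ and $\Gamma$ with $\mathrm{hol}(\pi_1(M))$; the resulting developing map is the inclusion $\Omega\hookrightarrow\mathbb{RP}^2$, a diffeomorphism onto the convex image. Conversely, given $(2)$, the image $\Omega:=\mathrm{dev}(\tilde M)$ is convex by hypothesis and is preserved by $\Gamma:=\mathrm{hol}(\pi_1(M))$ via the equivariance relation $\mathrm{dev}\circ\gamma=\mathrm{hol}(\gamma)\circ\mathrm{dev}$; the action of $\Gamma$ on $\Omega$ inherits freeness and proper discontinuity from the action of $\pi_1(M)$ on $\tilde M$, so $M=\Omega/\Gamma$.

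For $(2)\Rightarrow(3)$: Given a path $\alpha:[0,1]\to M$, lift it to $\tilde\alpha$ in $\tilde M$ and push it forward via $\mathrm{dev}$ to a path in the convex set $\Omega$. In the affine patch containing $\Omega$, form the straight-line homotopy
\[
H(s,t)=(1-s)\,\mathrm{dev}(\tilde\alpha(t))+s\,\ell(t),
\]
where $\ell$ parametrizes the affine segment from $\mathrm{dev}(\tilde\alpha(0))$ to $\mathrm{dev}(\tilde\alpha(1))$. Convexity keeps $H$ inside $\Omega$; pulling back through $\mathrm{dev}^{-1}$ and projecting to $M$ yields a homotopy from $\alpha$ to a path that is a line in each projective chart, i.e. a geodesic path.

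For the hard direction $(3)\Rightarrow(2)$: Since $\tilde M$ is simply connected, $(3)$ lifts to the statement that any two points of $\tilde M$ are joined by a geodesic segment. To show that $\mathrm{dev}$ is injective, suppose $\mathrm{dev}(\tilde p)=\mathrm{dev}(\tilde q)$ with $\tilde p\neq\tilde q$, and let $\tilde\gamma$ be a geodesic joining them. Then $\mathrm{dev}\circ\tilde\gamma$ lies on a single projective line $L$ and is a loop at $\mathrm{dev}(\tilde p)$; extending $\tilde\gamma$ to a maximal projective geodesic in $\tilde M$ yields a $1$-dimensional submanifold mapping locally diffeomorphically onto the circle $L$, and a monodromy/Jordan-curve analysis in the simply connected $2$-dimensional setting contradicts either the simple connectivity of $\tilde M$ or the local injectivity of $\mathrm{dev}$. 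To obtain convexity of $\Omega=\mathrm{dev}(\tilde M)$, apply the same lifting to any $p,q\in\Omega$: the geodesic from $\tilde p$ to $\tilde q$ maps under $\mathrm{dev}$ to an arc on a projective line from $p$ to $q$ inside $\Omega$, and the same wrap-around exclusion forces this arc to be the affine segment $[p,q]$.

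The main obstacle is precisely this wrap-around phenomenon in $(3)\Rightarrow(2)$: a projective line is topologically a circle, so an abstract geodesic in $\tilde M$ could in principle be sent by $\mathrm{dev}$ to an arc tracing more than a simple segment on a projective line; ruling this out globally across all of $\tilde M$ is what simultaneously forces injectivity of $\mathrm{dev}$ and convexity of its image. The dimension-two hypothesis and the simple connectedness of $\tilde M$ are essential in this step, as they are what enable the planar topological arguments that exclude the pathological wrappings.
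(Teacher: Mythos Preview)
The paper does not supply its own proof of this proposition; it is quoted verbatim from Goldman \cite{Goldman}, Proposition~3.1, and left unproved here. So there is no in-paper argument to compare against. That said, your outline for $(1)\Leftrightarrow(2)$ and $(2)\Rightarrow(3)$ is standard and essentially correct, with one caveat: you place $\Omega$ in an affine patch and run a straight-line homotopy, which presupposes that $\Omega$ is \emph{properly} convex, whereas the statement as written only says ``convex open subset of $\mathbb{RP}^2$''. A word is needed to reduce to the proper case (or to handle $\Omega=\mathbb{RP}^2$, an affine patch, or a half-plane separately).

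The real gap is in $(3)\Rightarrow(2)$. You correctly isolate the obstruction---a geodesic in $\tilde M$ might develop onto an arc that wraps around a projective line---but the sentence ``a monodromy/Jordan-curve analysis \ldots contradicts either the simple connectivity of $\tilde M$ or the local injectivity of $\mathrm{dev}$'' is a placeholder, not an argument. Concretely: if the maximal geodesic through $\tilde p$ and $\tilde q$ develops as a covering of the circle $L\subset\mathbb{RP}^2$, nothing you have written excludes that covering from being $\mathbb{R}\to L$, which is perfectly compatible with $\tilde M$ simply connected and $\mathrm{dev}$ a local diffeomorphism (think of the developing map of an affine torus, or of exotic projective structures whose developing maps are surjective and far from injective). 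The Jordan curve theorem does not apply directly because the image curve is immersed, not embedded, and the domain geodesic need not be a simple closed curve in $\tilde M$. What is actually required---and what Goldman carries out---is a two-dimensional argument using the pencil of projective lines through a point: one shows that hypothesis~(3) forces each maximal geodesic in $\tilde M$ to develop \emph{injectively} into its line, and then organizes these geodesics to conclude global injectivity and convexity of the image. Your sketch names the right enemy but does not supply the mechanism that defeats it; as written, $(3)\Rightarrow(2)$ is not established.
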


We shall be interested in quotients of convex sets, and we make right now the following definition:

\begin{definition}
An open (properly) convex  set $\Omega$ is said to be \emph{divisible}\index{convex set!divisible}\index{divisible!convex set} if there exists a discrete subgroup $\Gamma\subset \mathrm{Aut}(\Omega)$ such that  $\Omega/\Gamma$ is compact.
\end{definition}

 There is a natural identification between two convex real projective manifolds $\Omega_1/\Gamma_1$  and $\Omega_2/\Gamma_2$, defined by the condition that there exists a projective transformation $g$ of $\mathbb{RP}^n$ such that $g(\Omega_1)=\Omega_2$ and $g\Gamma_1g^{-1}=\Gamma_2$. This equivalence relation is used in the definition  of the \emph{moduli space}\index{convex structures!moduli space} and the \emph{deformation space}\index{convex structures!deformation space}  of convex real projective structures. 
 
 We are mainly interested in the deformation space of convex projective structures on surfaces, which, like  the Teichm\"uller space of hyperbolic (or of conformal) structures, is a space of equivalence classes of 
 \emph{marked}\index{convex projective structure!marked}\index{marked convex projective structure} 
 convex projective structures. We recall the concept of marking\index{marking}, which originates in Teichm\"uller theory.
 
 We start with a fixed topological surface $S_0$. 
 \begin{definition}
 A \emph{marked convex projective structure} on $S_0$ is a pair $(f,S)$ where $S$ is a surface homeomorphic to $S_0$ equipped with a convex projective structure, and  $f:S_0\to S$ is a homeomorphism.
 \end{definition}
  A marked convex projective structure on $S_0$ induces a convex projective structure on the base surface $S_0$ itself by pull-back. Conversely, a convex projective structure on $S_0$ can be considered as a marked convex projective structure, by taking the marking to be the identity homeomorphism of $S_0$. \begin{definition}
  The \emph{deformation space} of convex projective structures on $S_0$ is the set of equivalence classes of pairs $(f,S)$, where $S$ is a convex projective surface homeomorphic to $S_0$ and where two pairs $(f,S)$ and $(f',S')$ are considered to be equivalent if there exists a projective homeomorphism $f'':S\to S'$ that is homotopic to $f'\circ  f^{-1}$.
  \end{definition}

Equivalently, we can define the deformation space of convex projective structures on a surface (or, more generally, on a manifold) as the space of homotopy classes of convex projective structures on that surface (or manifold). This space is equipped with a natural topology arising from the $C^1$ topology on developing maps. This topology is Hausdorff (see \cite{Goldman} p. 793). We shall say more about the deformation space of real projective structures on surfaces in \S \ref{para}.

The basic elements of the theory of convex real projective structures on closed surfaces and their deformations are due to Kuiper \cite{Kuiper}, Kac-Vinberg \cite{KV} and Benz\'ecri \cite{Ben}, and a complete theory has been developed (including the case of surfaces with boundary) by Goldman \cite{Goldman}. We shall elaborate on Goldman's parametrization of the deformation space in \S \ref{para}.

For any convex real projective manifold $M=\Omega/\Gamma$, the Hilbert metric on $\Omega$ descends to a metric on $M$ called the Hilbert metric of $M$.

For a strictly convex real projective structure $M=\Omega/\Gamma$, as in the hyperbolic case, there exists a unique Hilbert geodesic in each homotopy class of a loop, unless this loop represents a parabolic element of the fundamental group, in which case the curve is homotopic to a puncture (or cusp) of $S$.
\begin{lemma}\label{lemma:m} For any convex real projective surface $S$ of finite type and of negative Euler characteristic (the surface may have geodesic boundary components and cusps), the area of
$S$ with respect to the Hausdorff measure induced by the Hilbert
metric is uniformly bounded below independently of the topology of
$S$.
\end{lemma}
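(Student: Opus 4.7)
The strategy is to find, inside $S$, an embedded Hilbert ball of radius bounded below by a universal constant, and then to bound its area from below using Benz\'ecri's compactness theorem. Benz\'ecri's theorem asserts that the space of pointed properly convex open subsets $(\Omega,x)$ of $\mathbb{RP}^n$, modulo the natural action of $\mathrm{PGL}(n{+}1,\mathbb{R})$, is compact. Since the Hilbert volume $\mu_\Omega(B_x(r))$ of a ball of fixed radius $r$ is a continuous, strictly positive function on this moduli space, it attains a strictly positive minimum $c(r)>0$ depending only on $r$ and the dimension. In particular, every embedded Hilbert ball of radius $r$ in any convex real projective surface has area at least $c(r)$.

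To produce such a ball I would invoke a Margulis-type lemma for the Hilbert metric (proved in the strictly convex case by Crampon and Marquis, with related statements due to Cooper, Long and Tillmann): there exists a universal constant $\varepsilon_0>0$, depending only on the dimension, such that for any convex real projective surface $S=\Omega/\Gamma$, the $\varepsilon_0$-thin part
\[
S_{<\varepsilon_0}\;=\;\{\,y\in S:\mathrm{inj}(y)<\varepsilon_0\,\}
\]
decomposes as a disjoint union of tubular collars around short simple closed geodesics (corresponding to hyperbolic elements of $\Gamma$ with translation length less than $2\varepsilon_0$) and cusp neighborhoods (corresponding to parabolic elements). Every component of $S_{<\varepsilon_0}$ is topologically an annulus, so $\chi(S_{<\varepsilon_0})=0$. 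Additivity of the Euler characteristic across the circular interface between the thin and thick parts then gives $\chi(S_{\ge\varepsilon_0})=\chi(S)<0$, so the thick part is non-empty. For any $x\in S_{\ge\varepsilon_0}$, the Hilbert ball $B_x(\varepsilon_0/2)$ lifts injectively to $\Omega$ and embeds in $S$; its area therefore satisfies
\[
\mathrm{Area}(B_x(\varepsilon_0/2))\;\ge\;c(\varepsilon_0/2)\;>\;0,
\]
which yields a lower bound on the area of $S$ that depends only on the dimension, hence is uniform in the topology.

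The substantial ingredient is the Margulis lemma for Hilbert geometry in dimension two: one must know that, for some universal $\varepsilon_0>0$, the subgroup of $\mathrm{Aut}(\Omega)$ generated by elements moving a given point by less than $\varepsilon_0$ is virtually cyclic, so that the thin part has only the expected tubes and cusps. When $S$ has geodesic boundary components a minor adjustment is needed so that boundary points are not trivially classified as thin; one may, for instance, pass to the double along the boundary, which is a closed convex projective surface of negative Euler characteristic, apply the argument there, and transfer the bound back to $S$. Once the Margulis decomposition is granted, the rest is a direct packing argument combined with Benz\'ecri compactness.
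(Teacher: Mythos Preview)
Your argument is essentially correct, but it takes a genuinely different route from the paper's proof and invokes substantially heavier machinery.

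The paper argues as follows: since $\chi(S)<0$, the surface contains a geodesic pair of pants $P$, and $P$ can be cut into two ideal triangles (triangles whose vertices lie on $\partial\Omega$). Lifting one such ideal triangle $T$ to $\Omega$, one circumscribes $\Omega$ by a projective triangle $\triangle$ whose sides pass through the three ideal vertices of $T$. Monotonicity of the Hilbert measure under inclusion then gives $\mu_\Omega(T)\ge\mu_{\triangle}(T)$, and the right-hand side is a universal positive constant by the Colbois--Vernicos computation of ideal-triangle areas in a simplex. That is the whole proof.

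Compared with yours, the paper's argument is elementary and self-contained: it needs only the monotonicity $\Omega_1\subset\Omega_2\Rightarrow\mu_{\Omega_2}\le\mu_{\Omega_1}$ and a single explicit area computation. It avoids the Margulis lemma entirely, and it works uniformly for all properly convex $\Omega$ (no strict convexity hypothesis enters). Your approach, by contrast, imports the thick--thin decomposition and Benz\'ecri compactness; it is the natural transplant of the Riemannian proof, and it would generalize more readily to higher dimensions, but in dimension two it is overkill. Two small points to tighten in your write-up: the double along geodesic boundary is convex by Goldman's gluing theorem, but it need not be \emph{closed} if $S$ has cusps, so say ``finite-type without boundary'' instead; and the Margulis lemma you need must hold for properly (not just strictly) convex domains, so the Cooper--Long--Tillmann version is the relevant reference rather than Crampon--Marquis alone.
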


The proof uses a pair of pants decomposition of $S$. We recall that a (topological) \emph{pair of pants}\index{pair of pants} in  $S$ is an embedded surface which is homeomorphic to a sphere with three holes, where a \emph{hole} is either a puncture or a boundary component, such that the following two conditions hold:
\begin{itemize}
\item Each boundary component of $P$ is a simple closed curve in $S$ which is not homotopic to a point or to puncture of $S$.
\item There is no embedded annulus in $S$ whose two boundary components are the union of  boundary component of $P$ and a boundary component of $S$.
\end{itemize}
Note that the holes of $P$ might be boundary components of $S$, and they can also be punctures, and the latter are considered as boundary curves of length zero.

A \emph{topological pair of pants decomposition}\index{pair of pants!decomposition!topological} $\mathcal{P}$ of $S$ is a union of disjoint simple closed curves in that surface such that the closure of each connected component of the complement of $\mathcal{P}$ in $S$ is a topological pair of pants in $S$.

Any surface of negative Euler characteristic admits a topological pair of pants decomposition. It is easy to see, using an Euler characteristic argument, that for a closed surface $S_g$ of genus $g\geq 2$, there are $2g-2$ pairs of pants in a pair of pants decomposition. Given a surface equipped with a metric, ta A \emph{geodesic pair of pants decomposition}\index{pair of pants!decomposition!geodesic} is a topological pair of pants decomposition in which each curve which is not homotopic to a puncture is a closed geodesic. On any surface with finitely generated fundamental group equipped with a hyperbolic metric or with a Hilbert metric, any topological pair of pants decomposition is homotopic to a geodesic pair of pants decomposition. Furthermore, for hyperbolic metrics and for strictly convex structures, the closed geodesics in each free homotopy classes are unique, so every  topological pair of pants decomposition is homotopic to a unique geodesic pair of pants decomposition. 

We now sketch the proof of Lemma \ref{lemma:m}.

\begin{proof}Take a geodesic pair of pants $P$ in $S$. One can decompose $P$ into two ideal triangles. Therefore, it suffices to give a 
bound for the area of an ideal triangle. Let $T$ be a lift of
an ideal triangle in $\Omega$ with three ideal vertices
$p_1,p_2,p_3$. Choose three projective lines $P_1,P_2,P_3$
containing $p_1,p_2,p_3$ disjoint from $\Omega$. They form a
triangle $\triangle$ containing $\Omega$. Now by a comparison
argument, it suffices to lower bound the area of $T$ in $\triangle$. This
is an exercise in projective geometry; see \cite{CV}. 
\end{proof}

There are several papers where the reader can find concise introductions to the basics of the modern theory of projective geometry. We refer to the survey by Benoist \cite{Benoist-survey} and to the sections on preliminaries in the thesis of Marquis \cite{Marquis-These}. 

Let us note that in studying divisible convex sets of finite co-volume, we do not lose a lot if we restrict ourselves to strictly convex sets. The following theorem is due to Marquis \cite{Marquis}.
\begin{thm}\label{finitevolume}Let $\Omega$ be a proper open convex subset of $\mathbb{RP}^2$. If $\Omega$ is not a triangle and 
admits a finite-volume quotient surface, then $\Omega$ is strictly convex.
\end{thm}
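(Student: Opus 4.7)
I argue by contradiction: assume $\Omega$ is not strictly convex, so $\partial\Omega$ contains a nondegenerate closed segment; fix a maximal such segment $s=[p,q]$, not properly contained in any longer boundary segment. The aim is to deduce that $\Omega$ must be a triangle, contradicting the hypothesis.

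The first step sets up an enclosing triangle. Maximality of $s$ combined with proper convexity allows us to pick supporting lines $L_p\ni p$ and $L_q\ni q$ of $\Omega$, each distinct from the line $\overline{pq}$. Together with $\overline{pq}$, these three lines bound a triangle $T$ (with third vertex $L_p\cap L_q$) containing $\Omega$. The goal becomes to prove $\Omega=T$.

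The second step uses the finite Busemann volume of $S=\Omega/\Gamma$ to produce dynamics at $s$. The surface $S$ has finitely many ends, and in dimension two each end is of one of two types: a cusp with locally strictly convex $\partial\Omega$ and parabolic stabilizer, or a thick end bounded by a closed geodesic whose lifts in $\Omega$ are Hilbert geodesics whose endpoints lie on a single flat segment of $\partial\Omega$. Since $s$ is flat, its $\Gamma$-orbit cannot project to a cusp, and one obtains a hyperbolic element $\gamma\in\Gamma$ fixing $p$ and $q$ and translating along the Hilbert geodesic in $\Omega$ joining them.

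The crux is a Busemann-volume estimate. Let $U\subset\Omega$ be a small one-sided neighborhood of $s$, and compute the volume of a fundamental domain for $\langle\gamma\rangle$ in $U$ using Formula (\ref{norm}). If $\Omega\subsetneq T$ inside $U$, the factors $1/|x-p^\pm|$ in the Finsler norm do not grow fast enough to offset the unbounded tangential extent of the $\gamma$-orbit along $s$, and the integral diverges, contradicting finite volume of the quotient. Hence $\Omega=T$ near $s$; convexity of $\Omega\subset T$ together with $\gamma$-invariance then propagates equality to all of $T$, so $\Omega=T$ is a triangle. The main obstacle is this last estimate: one must control the Finsler norm as one approaches the flat side $s$ in the cases where $\partial\Omega\neq\partial T$ locally, while also handling possible non-smoothness of $\partial\Omega$ at the endpoints $p$ and $q$.
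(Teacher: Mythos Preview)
The paper does not give its own proof of this theorem; it is simply stated and attributed to Marquis \cite{Marquis}. So there is no in-paper argument to compare against, and your sketch must be judged on its own.

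Your outline has the right overall shape --- find a maximal boundary segment, produce an automorphism stabilising it, and run a volume estimate to force $\Omega$ to coincide with the circumscribed triangle --- but two of the steps, as written, beg the question.

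First, the ``classification of ends'' you invoke in the second step is essentially the content of Marquis's theorem, not an input to it. Asserting that each end of a finite-volume convex projective surface is either a cusp with \emph{locally strictly convex} boundary or a thick end whose lifts abut a single flat segment already presumes that one understands how flat pieces of $\partial\Omega$ interact with the group action and with finite volume. In Marquis's argument this structure of ends is \emph{established} along the way; you cannot simply quote it. Relatedly, the existence of a hyperbolic $\gamma\in\Gamma$ fixing both $p$ and $q$ does not follow from ``$s$ cannot project to a cusp''; one must first show that the stabiliser of $s$ in $\Gamma$ is nontrivial (otherwise infinitely many disjoint $\Gamma$-translates of $s$ sit in $\partial\Omega$, and a separate volume argument is needed there), then rule out that the stabiliser is parabolic or orientation-reversing on $s$.

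Second, the volume computation that you flag as ``the main obstacle'' is indeed the crux, and the sentence ``the factors $1/|x-p^\pm|$ do not grow fast enough'' is not yet an argument. One has to compare the Finsler unit ball near $s$ in $\Omega$ with that in $T$ and show quantitatively that any gap between $\partial\Omega$ and $\partial T$ forces $\mu_\Omega$ of a $\langle\gamma\rangle$-fundamental domain to diverge; this requires an explicit estimate, not just a heuristic. Finally, the last propagation step (``$\Omega=T$ near $s$'' implies ``$\Omega=T$'') is not justified: $\gamma$ translates along $s$, so $\gamma$-invariance alone does not move information transverse to $s$; you need an additional convexity argument or further elements of $\Gamma$ to conclude globally.
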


\section{Relation to affine spheres}\label{affine}

An affine sphere is a smooth hypersurface in $\mathbb{R}^{n+1}$ characterized by the condition that its affine normal lines meet at a common point. The family of affine spheres is invariant under the group of affine transformations of $\mathbb{R}^{n+1}$. In fact, it is the simplest interesting such family. Thus, it is not surprising that affine spheres are useful in the study of affine, and also of projective structures. (Recall that affine geometry is projective geometry where a hyperplane at infinity in $\mathbb{RP}^n$ has been fixed.) Affine spheres also appear naturally in the solutions of certain PDEs, namely, the Monge-Amp\`ere equations, and also in the study of hyperbolic surfaces. In fact, it is known that on a compact hyperbolic surface equipped with a cubic differential, there is a unique associated equiaffine\index{equiaffine metric} metric called the \emph{Blaschke metric}.\index{metric!Blaschke}\index{metric!equiaffine} (An object is called \emph{equiaffine} if it is invariant by the group of volume-preserving affine transformations.) This, together with the Cheng-Yau classification of complete hyperbolic affine spheres, gives a new parametrization of the space of real projective structures on the surface and of the Hitchin component of the representations of the fundamental group of the surface into $\mathbb{SL}(3,\mathbb{R})$, see \cite{Hi} \cite{La} \cite{Lo}. 

In this section, we review some intricate relations between affine spheres and Hilbert metrics.
 We shall also refer to the relation between cubic differentials and affine spheres in \S \ref{s:Hitchin} below. 

Let  $M\subset \R^{n+1}$ be a transversely oriented smooth hypersurface with a trivial bundle $E=M\times \R^{n+1}$. Choose a transverse vector field $\xi$ over $M$ so that $E=T_XM\oplus L$ where $TM$ is the tangent bundle to $M$ and $L$ a trivial line bundle over $M$ spanned by $\xi$. If $\nabla$ is the standard affine flat
connection on $\R^{n+1}$, its restriction on $E$ gives the following equations (Gauss and Weingarten):
\begin{eqnarray}
 \nabla_X Y=D_XY + h(X,Y)\xi,\\
 \nabla_X \xi=-S(X)+ \tau(X)\xi 
 \end{eqnarray}
 for any tangent vector fields $X$ and $Y$ to $M$. The equations are obtained by splitting at each point $x\in M$ the tangent space to $\mathbb{R}^{n+1}$ at that point to $TM_x\oplus L$.
Here, $D$ is a torsion-free connection on $TM$, $h$ a symmetric 2-form on $TM$, $S$ a shape operator and $\tau$ a 1-form. If $M$ is locally strictly convex, i.e., if it can be written locally as the graph of a function with positive definite Hessian, then there exists a unique transverse vector field $\xi$ such that
\begin{enumerate}
\item $\tau=0$, 
\item $h$ is positive definite, and
 \item $|\mathrm {det}(Y_1,\cdots,Y_n,\xi)|=1$ for any $h$-orthonormal frame $Y_i$ of $TM$.
 \end{enumerate}
 (see Proposition 2.1 of \cite{BH}).
 The vector field $\xi$  is called the {\it affine normal},\index{affine normal} $D$ the {\it Blaschke connection},\index{Blaschke connection} and $h$ the {\it affine metric}\index{affine metric}\index{metric!affine} on $M$. These are equiaffine notions.
  \begin{definition} A hypersurface $M$ in $\mathbb{R}^{n+1}$ is said to be an \emph{affine sphere with affine curvature $-1$}\index{affine sphere} if the shape operator satisfies $S=-Id$.
    \end{definition}

 In more geometric terms, an affine sphere in $\mathbb{R}^{n+1}$ is a smooth hypersurface characterized by the fact that its affine normal lines meet at a common point, called the center of the affine sphere (which could be at infinity, and in that case the affine sphere is said to be \emph{improper}). The affine normal field is an affine invariant of the surface and the condition of being an affine sphere is therefore affinely invariant. This makes the family of affine spheres is invariant under the group of affine transformations of $\mathbb{R}^{n+1}$.  Examples of affine spheres include ellipsoids and quadric hypersurfaces. There are two kinds of proper locally convex affine spheres, the \emph{hyperbolic}, all of whose normals point away from the center, and the \emph{elliptic}, all of whose normals point towards the center. An affine sphere is not necessarily affinely equivalent to a Euclidean sphere and, in fact, there are infinitely many non-equivalent affine spheres. See \cite{LS},  \cite{NS} and \cite{LM} for surveys on affine spheres. Another family of objects which is well-known to be invariant under the group of affine transformations is the family of straight lines.

 Affine spheres were introduced at the beginning of the twentieth century by \c Ti\c teica, and they were studied later on by Blaschke, Calabi, Cheng-Yau and others; see the survey by Loftin \cite{LS} for some historical background. The definition of the affine metric using the invariance of the affine normal was derived by Blaschke \cite{B}. 
  
  There are  relations, discovered by Blaschke and by Calabi, between the theory of affine spheres and the real Monge-Amp\`ere equations, and there is also a relation between affine spheres and the theory of convex real projective structures. The work of Cheng-Yau on affine spheres, combined with work of Wang \cite{Wang} on PDEs in the setting of affine differential geometry, was also used by Labourie and Loftin to parametrize equivalence classes of representations in $\mathrm{SL}(3,\mathbb{R})$ by cubic differentials on a Riemann surface (see \S \ref{s:Hitchin} below). 
  
The work of Cheng-Yau on the Monge-Amp\`ere equations associates to each properly convex subset of $\mathbb{R}^{n+1}$ an affine sphere in this space.  Our aim in the rest of this section is to give an idea of how Hilbert geometry fits into this picture, in particular through the following two results:
\begin{itemize}
\item For any proper convex set $\Omega$, the construction of affine spheres by Cheng and Yau leads naturally to a Riemannian metric on $\Omega$ which is bi-Lipshitz equivalent to the Hilbert metric at the level of norms, cf. Proposition \ref{Hilbert-affine} below. 

\item In the case of a strictly convex real projective surface, there is a comparison between the Hilbert volume on that surface with the affine volume (see Corollary \ref{comparison}). 
\end{itemize}

In both cases, the constants that appear in the comparison (that of the norms and that of the volumes) are universal.
  
 We briefly recall the construction by Cheng and Yau of an affine sphere in $\mathbb{R}^{n+1}$ associated to properly convex subset of this space.

A \emph{cone}\index{cone} $\cal C\subset\R^{n+1}$ is a subset which is invariant by the action of the positive reals by homotheties. 
A \emph{convex cone}\index{convex cone} $\cal C\subset\R^{n+1}$ is a cone which is (the closure of) the inverse image of a convex set $\Omega$ in $\mathbb{RP}^n$ by the canonical projection $\mathbb{R}^{n+1}\setminus \{0\} \to \mathbb{RP}^n$. 
 To any bounded open convex subset $\Omega\subset\R^n$, there is an associated convex cone $$\cal C(\Omega)=\{t(1,x)| x\in\Omega, t>0\}\subset \mathbb{R}^{n+1}$$ which is a connected component of the inverse image of a convex set $\Omega$ by the natural map $\mathbb{R}^{n+1}\setminus \{0\} \to \mathbb{RP}^n$ with fiber $\mathbb{R}^*_+$. A group $\Gamma$ of projective transformations which acts properly discontinuously on $\Omega$ also acts on the cone $\mathcal{C}(\Omega)$. This is a consequence of the fact that any representation of a discrete group into $\mathrm{PSL}(n+1,\mathbb{R})$ lifts to a representation into the group $\mathrm{SL}^{\pm}(n+1,\mathbb{R})$ of invertible $(n+1)\times (n+1)$ matrices whose determinant is $\pm 1$. 
 
Cheng and Yau \cite{CY} proved the following, which solved a conjecture of Calabi \cite{Cal}:
\begin{thm}If $\cal C\subset\R^{n+1}$ is an open convex cone containing no lines, then
there exists a unique embedded hyperbolic affine sphere $\mathcal H$ whose center is the origin, which has affine curvature $-1$,
and which is asymptotic to the boundary of $\mathcal C$.
\end{thm}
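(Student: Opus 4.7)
The plan is to reduce this geometric problem to a Dirichlet problem for a singular real Monge--Ampère equation on a section of the cone. Since $\mathcal{C}$ contains no lines, after a linear change of coordinates we may write
\[
\mathcal{C} \;=\; \bigl\{\, t(1,x) : t>0,\; x\in\Omega\,\bigr\},
\]
where $\Omega\subset\mathbb{R}^n$ is a bounded open convex set. I would look for $\mathcal{H}$ in the form of a radial graph $\{\rho(x)(1,x):x\in\Omega\}$, with $\rho:\Omega\to(0,\infty)$ a smooth positive function such that $\rho\to\infty$ as $x\to\partial\Omega$ (so that the surface is asymptotic to $\partial\mathcal{C}$). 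Computing the affine normal of such a radial graph via the Gauss--Weingarten decomposition recalled in this section, and imposing that (i) the affine normal is proportional to the position vector, so that the center is the origin, and (ii) the shape operator equals $-\mathrm{Id}$, so that the affine curvature is $-1$, yields after the substitution $u=-1/\rho$ a Dirichlet problem of the form
\[
\det\bigl(u_{ij}\bigr) \;=\; \left(\frac{-1}{u}\right)^{n+2} \quad \text{in }\Omega, \qquad u=0 \text{ on }\partial\Omega,\qquad u<0 \text{ in }\Omega,
\]
whose strictly convex solutions $u$ correspond bijectively to embedded hyperbolic affine spheres of the required type.

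The existence of a solution $u$ would be established by the continuity method and approximation. First, exhaust $\Omega$ by a nested family of smooth strictly convex domains $\Omega_k\Subset\Omega$; on each $\Omega_k$ solve the \emph{regularized} Dirichlet problem with boundary value $-\varepsilon_k\to 0$ and a right-hand side truncated away from the singularity. The classical theory of Caffarelli--Nirenberg--Spruck and Pogorelov provides a unique smooth strictly convex solution $u_k$. Using the concavity of $u\mapsto(\det D^2u)^{1/n}$, together with upper and lower barriers built from the explicit affine-sphere solutions on the inscribed and circumscribed ellipsoids of $\Omega$ (for which the affine sphere is the standard hyperboloid and the answer is known), one derives uniform interior $C^0$, $C^1$ and $C^2$ bounds for $u_k$ on every compact $K\Subset\Omega$, as well as a two-sided bound on the blow-up rate of $u_k$ at $\partial\Omega$. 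Extracting a subsequential limit gives a smooth strictly convex solution $u_\infty$ on all of $\Omega$ with $u_\infty\to 0$ on $\partial\Omega$.

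From $u_\infty$ I would then reconstruct the hypersurface $\mathcal{H}=\{-(1,x)/u_\infty(x):x\in\Omega\}$ and verify directly from the PDE and the normalization conditions of the affine normal listed in the text that $\mathcal{H}$ is embedded (by strict convexity of $u_\infty$), that its affine normal points radially from the origin (so the center is $0$), that $S=-\mathrm{Id}$ (so it is hyperbolic with affine curvature $-1$), and that $\rho=-1/u_\infty\to\infty$ at $\partial\Omega$ translates into $\mathcal{H}$ being asymptotic to $\partial\mathcal{C}$. For uniqueness, any two candidate affine spheres produce via the reverse correspondence two convex solutions $u_1,u_2$ of the same singular Monge--Ampère problem; a comparison-principle argument applied to $u_1-u_2$, combined with the boundary barriers above to rule out boundary pathologies, forces $u_1\equiv u_2$, hence $\mathcal{H}$ is unique.

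The principal obstacle is the derivation of the \emph{interior} a priori $C^2$ estimates that are independent of the approximation parameter, because the right-hand side $(-1/u)^{n+2}$ degenerates (in fact blows up) on $\partial\Omega$ and the domain itself need only be convex, not smooth. Overcoming this requires genuinely new Pogorelov-type estimates adapted to the singular boundary behavior, and this is the technical heart of Cheng and Yau's argument; all subsequent steps, existence, regularity, geometric identification and uniqueness, are comparatively formal once these estimates are in hand.
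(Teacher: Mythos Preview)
The paper does not actually prove this theorem: it is stated as the result of Cheng and Yau \cite{CY} (settling a conjecture of Calabi) and is quoted without proof. What the paper does record, immediately after the statement, is exactly the construction you propose: the affine sphere is written as the radial graph $\mathcal{H}=\{-(1,x)/u(x):x\in\Omega\}$ over the section $\Omega$ of the cone, where $u$ is the unique convex solution of the singular Monge--Amp\`ere problem $\det D^2u=(-1/u)^{n+2}$ with $u|_{\partial\Omega}=0$. Your reduction to this Dirichlet problem, the approximation/continuity scheme, the barrier and Pogorelov-type interior estimates, and the comparison-principle uniqueness argument are precisely the ingredients of the Cheng--Yau proof that the paper is citing, so your outline is faithful to (indeed more detailed than) what the paper presents.
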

The fact that $\mathcal{C}$ contains no lines is equivalent to the fact that the convex set $\Omega$ is proper.

The affine invariants of $\mathcal{H}$ are invariants of $\Omega$. 

The projection map induces a homeomorphism between $\mathcal{H}$ and $\Omega$. 
  
 To the bounded open convex set $\Omega\subset\R^n$ is  associated an affine sphere, defined as $$\cal H=\left\{\frac{-1}{u(x)}(1,x)| x\in\Omega\right\}$$ where $u$ is the unique convex solution
of the real Monge-Amp\`ere equation
$$\mathrm{det} D^2 u=(-1/u)^{n+2}$$ satisfying
$$u|_{\partial \Omega}=0.$$
The projection map  $\mathbb{R}^{n+1}\setminus \{0\} \to \mathbb{RP}^n$  induces a diffeomorphism between $\mathcal{H}$ and $\Omega$, and the affine metric $h$ on $\cal H$ induces a Riemannian metric on $\Omega$, still denoted by $h$ and called the \emph{affine metric}.\index{metric!affine}\index{affine metric} This metric gives rise to a measure $\mu_h$ on $\Omega$.  

Now we have two measures on $\Omega$, one is $\mu_h$, coming from the affine metric and the other is $\mu_\Omega$, coming from the Hilbert metric.

Benoist and Hulin proved in \cite{BH} that the affine metric is bi-Lipschitz equivalent to the Hilbert metric:
\begin{Prop}[Benoist-Hulin]\label{Hilbert-affine}There exists a constant $c>0$ such that for any properly convex set $\Omega$,  $x\in\Omega$ and $X\in T_x\Omega$,
$$\frac{1}{c} ||X||_F\leq ||X||_h \leq c ||X||_F,$$ where the subscript $F$ denotes the Finsler norm of the Hilbert metric and $h$ the affine metric.
\end{Prop}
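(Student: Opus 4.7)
The plan is to invoke Benz\'ecri's compactness theorem: the action of the projective group on the set of pointed properly convex subsets $(\Omega,x)$ of $\mathbb{RP}^n$ is cocompact, so after passing to the quotient one works on a compact space. Both norms behave covariantly under projective transformations. For the Hilbert Finsler norm this is immediate from the cross-ratio definition (or directly from formula (\ref{norm})). For the affine metric $h$, the construction of the asymptotic hyperbolic affine sphere $\mathcal{H}$ associated to $\mathcal{C}(\Omega)$ is equiaffine, hence $\mathrm{SL}^{\pm}(n+1,\mathbb{R})$-equivariant, and every projective transformation of $\mathbb{RP}^n$ lifts to $\mathrm{SL}^{\pm}(n+1,\mathbb{R})$. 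Therefore the pointwise ratio $\|X\|_h/\|X\|_F$, for $X\ne 0$, depends only on the orbit of the triple $(\Omega,x,X)$ under the projective action.

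First I would normalize: use the projective action to move $x$ to a fixed basepoint $x_0\in\mathbb{RP}^n$ and then rescale so that $X$ lies on the unit sphere for a fixed Euclidean structure at $x_0$. Benz\'ecri's theorem then implies that the moduli space $\mathcal{Y}$ of such normalized triples $(\Omega,x_0,X)$, taken modulo the remaining compact orthogonal stabilizer, is compact. Next I would show that the function $\rho:\mathcal{Y}\to(0,\infty)$, $\rho(\Omega,x_0,X)=\|X\|_h/\|X\|_F$, is continuous and strictly positive. A continuous strictly positive function on a compact space attains a minimum $m>0$ and a finite maximum $M$, and the proposition follows with $c=\max(M,1/m)$.

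The main obstacle is continuity of $\|X\|_h$ as $\Omega$ varies. Continuity of $\|X\|_F$ is immediate because the intersection points $p^{\pm}$ of the line through $x_0$ in the direction $X$ with $\partial\Omega$ depend continuously on $\Omega$ in the Hausdorff topology. By contrast, $\|X\|_h$ is defined via the affine sphere $\mathcal{H}$, equivalently via the Cheng--Yau solution $u$ of the Monge--Amp\`ere equation $\det D^2 u=(-1/u)^{n+2}$ with $u|_{\partial\Omega}=0$. The plan is to combine uniqueness of $u$ with interior $C^{2,\alpha}_{\mathrm{loc}}$ regularity estimates for the Monge--Amp\`ere equation: if a sequence $\Omega_k$ converges to $\Omega$ in the Hausdorff topology, then on any compact subset $K\subset\Omega$ the solutions $u_k$ are eventually defined on $K$ and converge locally in $C^2$ to $u$, so the affine normal, the Blaschke connection, and the second fundamental form $h$ of $\mathcal{H}$ depend continuously on $\Omega$ on compact sets. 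This passes to the affine norm on each tangent space and yields continuity of $\rho$, after which the compactness of $\mathcal{Y}$ closes the argument with a universal constant $c$.
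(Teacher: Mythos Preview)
Your proposal is correct and follows essentially the same approach as the paper: the paper attributes the result to the cocompactness of the $\mathrm{SL}^{\pm}(n+1,\mathbb{R})$-action on pointed convex bodies (Benz\'ecri's theorem) together with the continuous dependence of both the Hilbert and affine metrics on $(x,\Omega)$. Your write-up supplies more detail than the paper's sketch, in particular the argument for continuity of $\|X\|_h$ via interior Monge--Amp\`ere estimates for the Cheng--Yau solution, but the underlying strategy is the same compactness-plus-continuity argument.
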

Benoist and Hulin deduce this result from the cocompactness of the action of $\mathrm{SL}^{\pm}(n+1,\R)$ on the set of pairs
$(x,\Omega)$ and the continuous dependence on $(x,\Omega)$ of both affine and Hilbert metrics. The result for Hilbert metrics is contained in Benz\'ecri's thesis, \cite{Ben}, Chapter V, where the author introduces several spaces he calls \emph{body spaces} and \emph{form spaces} (``espaces de corps" and ``espaces de formes"). One of these spaces is, for $n\geq 1$, the space of pairs
$(x,\Omega)$, where $\Omega\subset \mathbb{RP}^n$ is a properly
convex open subset and $x$ is a point in $\Omega$. It is equipped with  the Hausdorff topology. More precisely, the topology on the set of pairs $(x,\Omega)$ is the product topology, where on the first factor the topology is induced by the canonical metric on projective space (which is the quotient metric of the canonical metric on the sphere) and on the second factor the  topology is induced by the Hausdorff  distance on the compact subsets of projective space. (To use precisely the last notion, one needs to replace a subset $\Omega$ by its closure.)
 Benz\'ecri considers then the natural quotient of  this space  by the action of the group $\mathrm{SL}^{\pm}(n+1,\R)$ and he proves that this quotient is 
 compact and metrizable (see  \cite{Ben}, Th\'eor\`eme 2 p. 309).
This implies the following:
\begin{co}[\cite{BH} Proposition 2.6]\label{comparison}There exists a universal constant $C$ (that depends only on the dimension) such that for any convex real projective manifold $M$, we have, for any Borel subset $B$ of $M$, 
$$ \frac{1}{C} \mathrm{Vol}_H(B)\leq \mathrm{Vol}_A(B) \leq C \mathrm{Vol}_H(B),$$ 
where $H$ denotes the Hilbert volume and $A$ the  affine volume.
\end{co}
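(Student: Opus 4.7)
The plan is to pass from the bi-Lipschitz comparison of norms in Proposition \ref{Hilbert-affine} to a bi-Lipschitz comparison of the densities of the two volume measures on $\Omega$, and then to descend to $M = \Omega/\Gamma$ by invariance. Both $\mathrm{Vol}_H$ and $\mathrm{Vol}_A$ are canonically attached to $\Omega$: the former is the Busemann volume defined in \S \ref{s:preliminaries} via $d\mu_\Omega(x) = d\mathrm{Vol}(x)/\mathrm{Vol}(B_x^F(1))$, and the latter is the Riemannian volume of the affine metric $h$, which is of the same shape, $d\mu_h(x) = d\mathrm{Vol}(x)/\mathrm{Vol}(B_x^h(1))$, since the Busemann normalization reproduces the Riemannian volume in the Riemannian case. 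Hence the two densities with respect to Lebesgue measure on the affine patch differ only through the Lebesgue volumes of the two unit balls $B_x^F(1)$ and $B_x^h(1)$ in $T_x\Omega$.

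First I would extract from Proposition \ref{Hilbert-affine} the inclusions
\[
\tfrac{1}{c}\, B_x^F(1) \;\subset\; B_x^h(1) \;\subset\; c\, B_x^F(1),
\]
which hold at every point $x$ of every proper convex $\Omega$ with the same universal constant $c$. Taking the $n$-dimensional Lebesgue volume (where $n = \dim \Omega$) and then inverting gives
\[
c^{-n}\,\frac{1}{\mathrm{Vol}(B_x^F(1))} \;\leq\; \frac{1}{\mathrm{Vol}(B_x^h(1))} \;\leq\; c^{n}\,\frac{1}{\mathrm{Vol}(B_x^F(1))},
\]
so the Radon--Nikodym derivatives of $\mu_\Omega$ and $\mu_h$ with respect to Lebesgue measure are pointwise comparable with constant $C := c^n$, independent of $\Omega$ and of $x$.

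Integrating this pointwise inequality over any Borel subset $\widetilde B \subset \Omega$ yields the desired inequality on the convex set itself. To transfer the inequality to $M = \Omega/\Gamma$, I would note that both measures are $\mathrm{Aut}(\Omega)$-invariant: the Hilbert volume is $\mathrm{Aut}(\Omega)$-invariant by construction, and the affine volume is invariant because the affine sphere $\mathcal{H}$ of Cheng--Yau is a canonical object attached to the cone $\mathcal{C}(\Omega)$, so any element of $\Gamma \subset \mathrm{Aut}(\Omega)$ lifts to an element of $\mathrm{SL}^{\pm}(n+1,\mathbb{R})$ which, being unimodular, preserves the equiaffine data (the affine normal, the Blaschke connection, and hence $h$). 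Given a Borel set $B \subset M$, I pick a Borel fundamental domain $F$ for $\Gamma$ in $\Omega$ and take $\widetilde B$ to be the lift of $B$ inside $F$; then $\mathrm{Vol}_H(B) = \mu_\Omega(\widetilde B)$ and $\mathrm{Vol}_A(B) = \mu_h(\widetilde B)$, and the inequality on $\Omega$ gives the corollary with the same constant $C = c^n$.

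The only delicate point is the independence of the constant from $\Omega$, and this is not an additional obstacle here: it is already built into Proposition \ref{Hilbert-affine}, which Benoist and Hulin obtain precisely from the Benz\'ecri compactness of $\mathrm{SL}^{\pm}(n+1,\mathbb{R})\backslash\{(x,\Omega)\}$ together with continuity of the two metrics in $(x,\Omega)$. The rest of the argument above is entirely formal and the passage from norms to volumes only costs a factor $c^n$, so no further geometric input is needed.
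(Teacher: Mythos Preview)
Your proof is correct and follows essentially the same route as the paper: the corollary is stated immediately after Proposition~\ref{Hilbert-affine} and the Benz\'ecri compactness result with the words ``This implies the following'', and your argument simply spells out the routine passage from the uniform norm comparison to the uniform density comparison via the inclusions of unit balls, followed by descent to the quotient. There is nothing substantively different here; you have filled in the details the paper leaves implicit.
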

In particular, we have, in the case where $M$ is a finite volume quotient:
$$
  \frac{1}{C} \mathrm{Vol}_H(M)\leq \mathrm{Vol}_A(M) \leq C \mathrm{Vol}_H(M).
$$ 
Thus, for a convex real projective manifold, having finite affine volume and having finite Hilbert volume are equivalent properties. 

One should mention that in the special case of surfaces and under the additional assumption that the boundary of the convex set is smooth, Loewner and Nirenberg solved the Monge-Amp\`ere equation and they also constructed a Riemannian metric on the convex set which is invariant by projective transformations. By the uniqueness of the solution to the Monge-Amp\`ere equation, this metric is the same as the one constructed by Cheng and Yau using affine spheres.

 The relation between the structure equations of affine spheres and $\mathbb{RP}^2$ structures is explained in detail in the survey \cite{LM} by Loftin and McIntosh.

The asymptotic affine sphere and an invariant Riemannian metric associated to a Hilbert geometry are also mentioned in Chapter 8 of this volume \cite{Marquis-H} (\S 4.1 and 4.2).

We shall say more on affine spheres in \S \ref{s:Hitchin} below.  

\section{Parametrizations of real projective structures and the deformation spaces}\label{para} 
The classification of convex real projective structures on surfaces of nonnegative Euler characteristic is due to Kuiper, cf. \cite{Kuiper1} and \cite{Kuiper}.
In this section, we describe parameter spaces for these structures. We start, in the first subsection,  with Goldman's parametrization, which was inspired by Thurston's exposition of the Fenchel-Nielsen parameterization of Teichm\"uller space associated to a pants decomposition of a hyperbolic surface that consists of the length and the twist parameters of the pants curves.  

The Teichm\"uller space of a surface sits naturally as a subset of the deformation space of real projective structures, and Goldman's parameters are a generalization of the Fenchel-Nielsen parameters. We then describe Hitchin's parametrization, which also generalizes in the parameters for Teichm\"uller space, when this space is considered as a connected component of the character variety of the fundamental group of the surface in $\mathrm{SL}(2,\mathbb{R})$. Hitchin's parameter space is a component of the character variety of representations of the fundamental group of the surface in $\mathrm{SL}(n,\mathbb{R})$. Hitchin's work gave rise to several other works by various authors, and it is also related to several questions addressed in this paper. We shall mention some of these relationships below. In the final subsection, we mention a set of parameters due to the first author of this paper that use the length spectra of the Hilbert metrics. This is also a generalization of a classical parametrization of hyperbolic structures by geodesic length spectra. 

It is an interesting question to study more carefully the structure of the parameter spaces and their nature (analytic, algebraic, etc.)

\subsection{Goldman's parametrization}\label{s:Gol}

Our goal in this subsection is to give a brief description of Goldman's parameters for the deformation space of convex real projective structures on surfaces of nonnegative Euler characteristic. 

Recall that by using the Cayley-Klein-Beltrami model of hyperbolic geometry, a hyperbolic structure (in the sense of a Riemannian metric of constant curvature equal to -1) on a closed surface $S$ of negative Euler characteristic is a special case of a convex real projective structure. This is a structure of the type $\Omega /\Gamma$ where $\Omega\subset \mathbb{R}^n\subset \mathbb{RP}^n$ is the interior of an ellipse and $\Gamma$ a subgroup of the projective transformations of $\mathbb{RP}^n$ that preserve $\Omega$.

 For any topological surface $S$ of negative Euler characteristic, a classical and useful set of parameters for the space of isotopy classes of hyperbolic structures on $S$ (that is, for the Teichm\"uller space of $S$) is provided by the Fenchel-Nielsen coordinates associated to pair of pants decompositions. These parameters consist of the set of lengths of the pants curves of this decomposition, rendered geodesic for  the hyperbolic structure, together with the twist parameters along these curves that measure the way in which the pairs of pants are glued together. (One needs a convention to measure the twists, whereas the length parameters are intrinsic.) In the case of a closed surface of genus $g\geq 2$, an Euler characteristic argument shows that the number of curves in any pants decomposition of $S$ is $3g-3$. Thus, the length and twist parameters of $S$ make a total of $6g-6$ parameters, which is indeed the dimension of the Teichm\"uller space of $S$. These parameters are the so-called Fenchel-Nielsen parameters.\index{Fenchel-Nielsen parameters}

For convex real projective structures on surfaces, there are analogues of the Fenchel-Nielsen parameters associated to a pair of pants decomposition, and we now describe them briefly.  This parametrization of the space of equivalence classes (the so-called deformation space)\index{convex projective structure!deformation space} of convex real projective structures on a closed surface of genus $g\geq 2$ was obtained by Goldman \cite{Goldman},\index{convex projective structure!Goldman parameters}\index{Goldman parameters} who showed that this deformation space, $\mathcal{D}(S)$, is homeomorphic to an open cell of dimension $16(g-1)$. In this projective setting, the pants curves are made geodesic with respect to the Hilbert metric (in the coordinate charts, the curves are made affine straight lines). The length of a simple closed geodesic in a given homotopy class is uniquely defined, and this set of lengths is part of the parameters associated to pair of pants decomposition. However, in the setting of projective structures, the other parameters of the Fenchel-Nielsen coordinates are more complicated to describe that in the case of Teichm\"uller space; they are described in terms of $3\times 3$ matrices that represent the curves on the surface. In what follows, we shall give an idea of these parameters.

We recall that the fundamental group of a surface equipped with a convex real projective structure acts freely and properly discontinuously on the convex set $\Omega$ which is the image of the associated developing map. Thus, instead of talking about parameters for the equivalence classes of convex real projective structures on a given closed surface $S$, one can talk about parameters for the equivalence classes of properly convex open subsets of $\mathbb{RP}^2$ equipped with a properly discontinuous free action of a group isomorphic to the fundamental group $\pi_1(S)$. 
In this way, the deformation space can be viewed as an open subset of the character variety\index{character variety} 
\[
\mathrm{Hom}(\pi_1(S)\to \mathrm{PSL}(3,\mathbb{R}))/\mathrm{PSL}(3,\mathbb{R})
\]
of representations of the fundamental group $\pi_1(S)$ into the Lie group $\mathrm{PSL}(3,\mathbb{R})$ and where the quotient is by the action of $\mathrm{PSL}(3,\mathbb{R})$ on the space of representations by conjugation.
The Teichm\"uller space $\mathcal{T}(S)$ becomes a subspace of $\mathcal{D}(S)$. In the general case of a compact surface $S$ of finite type  with $n$ boundary components with negative Euler characteristic $\chi(S)$, denoting by  $\mathcal{D}(S)$  the deformation space of convex real projective structures on $S$,  Goldman proves the following (see \cite{Goldman} Theorem 1):
\begin{thm}\label{thG}
The space  $\mathcal{D}(S)$ is an open cell of dimension $-8\chi(S)$, and the map which associated to each convex projective surface $S$ the germ of its structure near $\partial S$ is a fibration of $\mathcal{D}(S)$ over an open $2n$-cell with fiber an open cell of dimension $-8\chi(S)-2n$.
\end{thm}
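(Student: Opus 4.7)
The plan follows Goldman's original strategy via a pants decomposition. Fix a topological pair of pants decomposition $\mathcal{P}$ of $S$, which exists since $\chi(S)<0$. It consists of $2g-2+n$ pairs of pants, glued along $3g-3+n$ interior curves, with $n$ additional boundary curves inherited from $\partial S$. The deformation space $\mathcal{D}(S)$ will be reconstructed as a fibered product over the conjugacy-class data along the pants curves.

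The first (and most delicate) step is to analyze the deformation space of a single pair of pants $P$. Since $\pi_1(P)$ is free on two generators $a,b$ (with boundary holonomies $a,b,(ab)^{-1}$), the character variety $\mathrm{Hom}(\pi_1(P),\mathrm{SL}(3,\mathbb{R}))/\mathrm{SL}(3,\mathbb{R})$ has dimension $2\cdot 8-8=8=-8\chi(P)$. I would cut out of this $8$-dimensional space the locus of convex projective holonomies with ``positive hyperbolic'' boundary behaviour (each boundary element conjugate to a diagonal matrix with three distinct positive real eigenvalues) and show that the map sending such a structure to the triple of boundary conjugacy classes is a fibration onto an open subset of a $6$-dimensional base (each conjugacy class being a $2$-dimensional manifold when the eigenvalues are distinct positive reals) whose fibers are open $2$-cells encoding the ``internal'' shape of $P$.

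Next I would describe the gluing parameters along each interior curve. When two pants are joined along a curve whose holonomy is a positive hyperbolic element $h$, the gluing is governed by the centralizer $Z(h)$ in $\mathrm{PGL}(3,\mathbb{R})$, which is a $2$-dimensional diagonal torus; this yields two gluing parameters per interior curve, a Fenchel--Nielsen-type twist together with a ``bulge'' parameter new to the projective setting. The global dimension count is then
\[
\underbrace{2(2g-2+n)}_{\text{internal pants}} \;+\; \underbrace{4(3g-3+n)}_{\text{conjugacy class + gluing, interior}} \;+\; \underbrace{2n}_{\text{boundary conjugacy}} \;=\; 16g+8n-16 \;=\; -8\chi(S),
\]
and each coordinate ranges over an open interval, so the space is an open cell. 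The fibration over $\partial S$-germs is built in: forgetting all parameters except the $n$ outer boundary conjugacy classes gives the projection onto an open $2n$-cell whose fibers are open cells of dimension $-8\chi(S)-2n$.

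The main obstacle is the single-pair-of-pants analysis. One must show not only that the representation variety has the expected dimension, but that the \emph{convex} projective structures on $P$ with prescribed positive hyperbolic boundary holonomies form a genuine $2$-cell. This requires an explicit normalization of the boundary matrices in $\mathrm{SL}(3,\mathbb{R})$, a careful treatment of the constraint $abc=1$, and verification that the developing image of $\tilde P$ is properly convex for every choice of the two internal parameters. The convexity hypothesis is precisely what cuts an open cell out of the character variety rather than some more complicated algebraic subset, and establishing this cleanly, together with an argument that the parametrization is onto (every convex real projective pair of pants with positive hyperbolic boundary is obtained this way), is the technical heart of Goldman's theorem.
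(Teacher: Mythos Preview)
Your proposal is correct and follows the same route that the paper describes (and that Goldman originally took): decompose $S$ into pairs of pants, identify the $8$-dimensional cell of convex $\mathbb{RP}^2$ structures on a single pair of pants as a fibration over the $6$-cell of boundary conjugacy classes with $2$-cell fibers, and then assemble using the $2$-dimensional centralizer of each interior positive hyperbolic holonomy as twist-plus-bulge gluing data. Note, however, that the paper itself does not \emph{prove} this theorem; it is stated as Goldman's result \cite{Goldman} and followed by an informal exposition of the parameters, so there is no independent argument here to compare against beyond that description---your sketch is in fact more detailed than what the paper provides, and your identification of the single-pair-of-pants convexity analysis as the technical heart is exactly right.
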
 

We shall explain below the behavior of the projective structures near the boundary.

In the rest of this subsection, we give a brief description of Goldman's parameters.

 The parameters are based on pairs of pants decompositions of the surface. In what follows, we shall assume that the pairs of pants decompostions that we consider are all geodesic. The parametrization of convex real projective structures on $S_g$ is then done by first describing the structures on individual pairs of pants and then gluing, as we usually do in Teichm\"uller theory for the parametrization of hyperbolic structures. It turns out however that in the case of general real projective structures, the parameters associated to the pair of pants and to their gluing are more intricate than in the case of hyperbolic structures. 
  
In this way, one is led to the question of understanding convex projective structures on pairs of pants, and this naturally requires the study of convex structures on surfaces with boundary.  

Recall that a convex projective structure on a surface $S$ is a maximal atlas with values in $\mathbb{RP}^2$ whose transition functions are restrictions of projective automorphisms of $\mathbb{RP}^n$. We need to consider convex projective structure on surfaces with boundary. This is also defined by an atlas with values in $\mathbb{RP}^2$, and we furthermore require that for each open set $U$ in $S$ which is the domain of a chart satisfying $U\cap \partial S\not=\emptyset$ and for each arc in $U$ which is in $\partial S$, the image of this arc by the chart map is contained in a projective line of $\mathbb{RP}^2$. The boundary components therefore become \emph{closed geodesics}, and we require that each such closed geodesic has a geodesically convex collar neighborhood whose holonomy has distinct positive eigenvalues. Such structures are sometimes called structures ``with standard convex projective collar neighborhood". The image by the holonomy map of a loop representing a boundary component, which is well defined up to conjugacy, is a matrix in $\mathrm{SL}(3,\mathbb{R})$ which has three distinct and positive real eigenvalues whose product is equal to 1. Such a matrix is termed by Goldman \emph{positive hyperbolic}.\index{positive hyperbolic!transformation} The matrix in $\mathrm{SL}(3,\mathbb{R})$ associated to such a boundary component  is well defined up to conjugacy and it plays the role of the Fenchel-Nielsen length parameter in the hyperbolic setting. Thus, we have a \emph{Fenchel-Nielsen type length parameter} associated to a closed geodesic  which is  two-dimensional in the case of convex projective structures. One can glue projective structures on surfaces with boundary with standard convex projective collar neighborhoods by identifying the collar neighborhoods of boundary components using projective isomorphisms. Goldman proves in \cite{Goldman} that if we glue in this way a finite number of convex projective structures on surfaces with boundary which all have negative Euler characteristic, then the resulting projective structure is convex.

On each pair of pants, a convex real projective structure is determined by $3\cdot 2 +2$ parameters. Here, 3 is the number of boundary curves of the pair of pants, and near each boundary curve,
the projective structure  is determined by the two real parameters which we mentioned above. Also, whereas a hyperbolic structure on a pair of pants is completely determined by the ``length parameters" associated to the boundary components (which are the lengths of these bondary components, including boundary component of length zero, which correspond to cusps),  a general convex real projective structure on a pair of pants is not determined by the sole length parameters associated to the boundary, but there are two more extra real parameters involved, called ``twist parameters".

To be more precise, recall first that the projective transformations of $\mathbb{RP}^2$ can be represented by $3\times 3$ matrices. These matrices act on the vector space $\mathbb{R}^3$, and their action on $\mathbb{RP}^2$  is the quotient action. The whole discussion can be reduced then to considerations on matrices and their actions on $\mathbb{R}^{3}$, or, more precisely, on the set of lines through the origin of that vector space (which is the projective space $\mathbb{RP}^{2}$). Note also that the group acting by matrices on $\mathbb{R}^{3}$ is in fact the group $\mathrm{SL}^{\pm}(3,\mathbb{R})$ and not $\mathrm{PSL}(3,\mathbb{R})$, but we shall not worry here about the difference between the two groups.

Thus, the holonomy of each closed geodesic on the surface $S$ is positive hyperbolic. In particular, the set of parameters for   $\mathrm{SL}(3,\mathbb{R})$-conjugacy classes of such a matrix is an open 2-cell. Goldman, in \cite{Goldman}, gives three equivalent sets of coordinates for the space of conjugacy classes of such matrices, and he makes a complete study of the dynamics of the action of a transformation of $\mathbb{RP}^2$ representing such an element. We are now interested in the automorphisms of $\mathbb{RP}^2$ that preserve $\Omega$. For a hyperbolic element of  $\mathrm{Aut}(\Omega)$, the attractive and repulsive fixed points belong to the boundary of the convex set $\Omega$. A lift to $\Omega$ of the geodesic in the quotient surface $S$ is preserved by the action of the corresponding affine transformation, and that action is indeed hyperbolic in the sense that it is a translation along that geodesic, with an attractive and a repelling fixed point as endpoints. 

A positive hyperbolic isometry representing a boundary component of a pair of pants in the decomposition has three distinct eigenvalues $\lambda_1>\lambda_2>\lambda_3$ and it is conjugate to a diagonal matrix with these eigenvalues, $\lambda_1,\lambda_2,\lambda_3$, in that order, on the diagonal. The action on $\mathbb{RP}^2$ of such a matrix has three fixed points. Using the coordinates of $\mathbb{R}^3$, these points correspond to the line passing through $(1,0,0)$, which is an attracting fixed point, the line passing through $(0,1,0)$, which is a saddle point, and the line passing through $(0,0,1)$, which is repelling. Now the three lines in $\mathbb{RP}^2$  passing through the pairs of such points divide the space into four triangles which are invariant by the action of the given diagonal matrix. Goldman in \cite{Goldman} describes in detail the action of this matrix on these triangles. 

In the above matrix representation, the first parameter for the projective structure on the pair of pants which is associated to a closed curve which is the boundary of a pair of pants is $\log \frac{\lambda_1}{\lambda_3}$, which is the Hilbert metric length of the closed geodesic. The other parameter is $3\log \lambda_2$.  (We are following Goldman's exposition,  and the factor 2 is a convenient normalization.) These are the 6 parameters associated to the boundary curves of a pair on pants.  The extra two parameters are called by Goldman ``interior parameters".
In conclusion, we have the following proposition:
\begin{Prop}[Goldman \cite{Goldman}] The deformation space of $\mathbb{RP}^2$ structures on a pair of pants is an open cell of dimension 8.
\end{Prop}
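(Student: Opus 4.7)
The plan is to parametrize the deformation space by combining boundary holonomy data with interior gluing data, and then verify that every parameter tuple is actually realized by a bona fide convex projective structure on $P$. First I would fix a presentation $\pi_{1}(P)=\langle a,b\rangle$ so that the three boundary components are represented by the conjugacy classes of $a$, $b$, and $(ab)^{-1}$. Every marked convex projective structure on $P$ with standard convex projective collar neighborhoods yields a holonomy representation $\rho\colon \pi_{1}(P)\to \mathrm{SL}^{\pm}(3,\mathbb{R})$ sending each boundary class to a positive hyperbolic element, well defined up to overall conjugation. Passing a structure to its conjugacy class of holonomy gives a well-defined map from the deformation space of $P$ into the $\mathrm{SL}(3,\mathbb{R})$-character variety, and the strategy is to show this map is a homeomorphism onto an open cell of dimension $8$.

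Next I would count parameters. Each of the three positive hyperbolic conjugacy classes is pinned down by the two real invariants $\log(\lambda_{1}/\lambda_{3})$ and $3\log\lambda_{2}$ recalled in the text, giving $3\times 2=6$ boundary parameters. These fix the conjugacy classes of $\rho(a)$, $\rho(b)$, and $\rho(ab)$ but not the pair $(\rho(a),\rho(b))$ up to simultaneous conjugation; two extra ``interior'' parameters remain, encoding the relative position in $\mathbb{RP}^{2}$ of the fixed-point triangle of $\rho(a)$ against that of $\rho(b)$ (one can package these, for instance, as cross-ratio-type invariants built from the attracting, saddle and repelling fixed points of the two matrices). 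A Zariski-type count corroborates the total: $\dim \mathrm{SL}(3,\mathbb{R})^{2}-\dim\mathrm{SL}(3,\mathbb{R}) = 16-8 = 8$, the action of $\mathrm{SL}(3,\mathbb{R})$ by conjugation being proper and free on the locus of representations whose image contains two positive hyperbolic elements in sufficiently generic position.

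The principal difficulty is the inverse direction: to show that every admissible $8$-tuple of parameters arises from a genuine convex projective structure and that the parametrization is a homeomorphism. For this I would, given $\rho$ in the prescribed open locus of $\mathrm{Hom}(\pi_{1}(P),\mathrm{SL}(3,\mathbb{R}))$, construct a $\rho(\pi_{1}(P))$-invariant properly convex open set $\Omega\subset\mathbb{RP}^{2}$ from the attracting triangles of the boundary holonomies, and then assemble a fundamental domain inside $\Omega$ by gluing the invariant triangles dynamically described by Goldman. One must check that this domain has the combinatorics of a pair of pants, that the action is free and properly discontinuous, and that each boundary component inherits a standard convex projective collar neighborhood. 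This convexity-and-existence step is the hard part, and it is precisely what forces both the openness of the image in the character variety and the cell structure. Granted this, continuous dependence of the developing map on $\rho$ upgrades the $8$ explicit parameters to a global diffeomorphism from the deformation space onto $\mathbb{R}^{8}$, exhibiting it as an open cell of dimension $8$ as claimed.
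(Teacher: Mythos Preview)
The paper does not actually prove this proposition. It merely records the parameter count---two eigenvalue invariants $\log(\lambda_1/\lambda_3)$ and $3\log\lambda_2$ for each of the three boundary holonomies, plus two further ``interior parameters'', for a total of eight---and attributes the full result to Goldman's paper \cite{Goldman}. Your outline is entirely consistent with this description and is in fact considerably more detailed than anything the survey offers: you correctly isolate the substantive step as the inverse direction (constructing, from an admissible representation, a $\rho(\pi_1(P))$-invariant properly convex $\Omega$ and a fundamental domain with the right combinatorics), which the survey does not address at all.

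One remark on the route you sketch versus the one in the cited source: Goldman's original argument does not work directly at the level of the character variety as you propose, but rather decomposes the pair of pants into two ideal triangles, parametrizes convex projective structures on a triangle (where the moduli are completely explicit), and then analyzes the gluing. Your approach via simultaneous conjugacy classes of pairs $(\rho(a),\rho(b))$ and a Zariski dimension count is a legitimate alternative, but be aware that the ``cross-ratio-type invariants'' you allude to for the two interior parameters are not canonical and require some care to pin down, and that the existence/convexity step you flag as hard is genuinely nontrivial---it is where most of the work in \cite{Goldman} resides. Since the survey reproduces none of this, there is no real comparison to make at the level of argument; your plan matches the parameter description the paper gives and correctly identifies what the paper leaves to the reference.
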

When one glues two convex projective structures on pairs of pants along simple closed curves, there are two new parameters involved, associated to a simple closed curve $C$ obtained after gluing. One of these parameters is called the twisting parameter, and the other one is called the vertical twist parameter. The two parameters combined are the analogues of the single Fenchel-Nielsen twist parameter of the case of hyperbolic surfaces. In fact, in the case where the projective structure is a hyperbolic structure, then the twisting parameter is the usual twist parameter. More precisely, given a diagonal hyperbolic isometry $(\lambda_1,\lambda_2,\lambda_3)$ representing $C$, the matrices
$$\gamma_{t}=\left [\begin{matrix}
  e^{t} & 0 & 0 \\
  0   & 1 & 0 \\
  0   & 0 & e^{-t}
  \end{matrix}\right], O_t=\left[\begin{matrix}
                   e^{-\frac{1}{3}t} & 0 & 0 \\
                   0 & e^{\frac{2}{3}t} & 0 \\
                   0 & 0 & e^{-\frac{1}{3}t}\end{matrix}\right] $$  commute with  the diagonal matrix $(\lambda_1,\lambda_2,\lambda_3)$ where $\lambda_1,\lambda_2,\lambda_3$ are the three eignevalues which we described above and the parameters associated to these two matrices are used to glue two structures along $C$. Here, $\gamma_t$ is the twisting parameter and $O_t$ is the vertical parameter. Suppose that two pairs of pants $P_1$ and $P_2$ are glued along $C$. Once a projective structure on $P_1$ is given, the two parameters corresponding to the length and $\lambda_2$ for $C$ are already determined for $P_2$, and the twisting and the vertical parameters are needed to glue along $C$. In any case, the total parameters are $8(2g-2)$, since there are $2g-2$ pairs of pants and  8 parameters on each pair of pants. 

The vertical parameter gives rise to a so-called bulging deformation\index{bulging deformation} of the projective structure. This deformation is associated to a measured geodesic lamination on a hyperbolic surface, and it deforms the underlying $\mathbb{RP}^2$ structure of that surface. The bulging deformation is an extension of the earthquake deformation\index{earthquake deformation} (which itself is the extension of the Fenchel-Nielsen deformation from closed geodesics to measured geodesic laminations) and of the bending deformation in $\mathrm{PSL}(2,\mathbb{C})$ (which is a complexification of the Fenchel-Nielsen deformation) to the context of convex real projective structures. The bulging deformation was introduced by Goldman in \cite{Goldman}, who also wrote a recent paper on that subject \cite{Go}.

 Let us recall this deformation.
For a closed
hyperbolic surface $S$ and a closed geodesic $\gamma$ on $S$, after conjugation, we may assume that
$$\rho(\gamma)=\gamma_{t_0}=\left [\begin{matrix}
  e^{t_0} & 0 & 0 \\
  0   & 1 & 0 \\
  0   & 0 & e^{-t_0}
  \end{matrix}\right],\ t_0>0$$ as an element of $\mathrm{SL}(3,\R)$. Now let $\gamma_t$ be the one-parameter subgroup generated by
  $\rho(\gamma)$. If $\Omega$ is an ellipse so
that $S=\Omega/\rho(\pi_1(S))$, the eigenspaces $\R v_1,\R v_2,\R
v_3$ corresponding to $e^t,1,e^{-t}$ respectively, define three
points $\gamma_+, \gamma_0, \gamma_-$ in $\mathbb{RP}^2$ where $\gamma_\pm$
are the attracting and the repelling fixed points of $\rho(\gamma)$ on
$\partial \Omega$, and $\gamma_0$ is outside of $\Omega$. Consider a
triangle $\triangle$ passing through these three points with left
and right vertices corresponding to $\gamma_+,\gamma_-$, and top vertex to $\gamma_0$. Then the dynamics of $\rho(\gamma)$ on $\triangle$ is
from the right vertex to the left and top vertices, and from the top
vertex to the left vertex. Inside $\triangle$, the orbits of
$\gamma_t$ are arcs of conics tangent to $\triangle$, one of
which is the segment $C$ of $\partial\Omega$ from $\gamma_-$ to
$\gamma_+$. See the picture in \cite{Goldman}.

The time-$t$ earthquake map of $S$ along $\gamma$ is given by the partial right Dehn
twist along $\gamma$, which amounts to moving the right hand side of the
lifts of $\gamma$ by the amount $t$ in $\Omega$. This can be
realized by conjugating the action of the right hand side of $S\setminus\gamma=S_1\cup S_2$ (if
$\gamma$ is separating) by $\gamma_t$, $\rho(\pi_1(S_1))*_{\langle \rho(\gamma) \rangle} \gamma_t \rho(\pi_1(S_2))\gamma_t^{-1}$, and correspondingly, using an HNN
extension for the non-separating case.

Obviously this earthquake deformation does not change the domain
$\Omega$. To deform the domain, we perform the bulging deformation which we already mentioned.
We want to replace $C$ by another conic which is an orbit of
$\gamma_t$ tangent to $\triangle$. This can be realized by
conjugating the right hand side of $S\setminus\gamma$ (if $\gamma$ is separating) by $$O_t=\left[\begin{matrix}
                   e^{-\frac{1}{3}t} & 0 & 0 \\
                   0 & e^{\frac{2}{3}t} & 0 \\
                   0 & 0 & e^{-\frac{1}{3}t}\end{matrix}\right], $$ and correspondingly
using an HNN extension for the non-separating case. What $O_t$ does to the
domain is stretching it in the $\gamma_0$ direction, which entails to
move the boundary arc $C$ to one of the conics traced by $\gamma_t$
outside $\Omega$ if $t>0$, to one inside $\Omega$ if $t<0$.

When $t\ra\pm\infty$, the domain $\Omega$ degenerates to the one containing edges of $\triangle$. By Theorem \ref{finitevolume}, the degenerate
structures have infinite Hilbert area. Geometrically, the degenerate structure has an infinitely long cylinder attached along $\gamma$.
The following is an open question:

\smallskip

{\bf Question.} Is the topological entropy function decreasing to some non-zero number  in the above bulging deformation?

\subsection{Hitchin's parametrization}\label{s:Hitchin}
Hitchin parametrized a specific component, called the Hitchin component,\index{Hitchin component} of the character variety of representations of
$\pi_1(S)$ in $\mathrm{SL}(n+1,\br)$ \cite{Hi}. This is the component that contains the representations of hyperbolic structures, and it has some properties which are analogous to those of the Teichm\"uller space, which is a component of the character variety of representations in $\mathrm{SL}(2,\br)$.

In the case $n=2$ and if a complex structure on $S$ is fixed,  Hitchin identified the Hitchin component with the vector space $H^0(S, K^2\oplus K^3)$ of holomorphic quadratic and cubic differentials over the Riemann surface $S$. Here $K$ is the canonical line bundle of $S$. In his work, Hitchin used the techniques of Higgs bundles, which are holomorphic vector bundles equipped with so-called ``Higgs fields" that appeared in earlier works of Hitchin and of Simpson in their study of Teichm\"uller space.

The Hitchin component for $n=2$ (that is, representations in $\mathrm{SL}(3,\br)$)) was  described  again by
Labourie \cite{La} and Loftin \cite{Lo} independently using affine spheres, a notion which we considered in Section \ref{affine}. Both Labourie and Loftin showed that the space of equivalence classes of convex real projective structures on a closed oriented surface of genus $\geq 2$ is parametrized by the space of  conformal structures equipped with holomorphic cubic differentials. Using the theorem of Riemann-Roch gives another proof of the result of Goldman that this space of equivalence classes is an open cell of dimension $16(g-1)$ (cf. Theorem 4.3). The parametrization is by a fiber bundle whose   base (Teichm\"uller space) has dimension $3g-3$ and each fiber has dimension $5g-5$. This parametrization has also the advantage of equipping the deformation space of convex real projective structure with a natural complex structure.
Labourie made in \cite{La} the relation between these cubic differentials and those which appears in Hitchin's parametrization.

The proof of the correspondence established by Labourie and Loftin is based on the works of Cheng-Yau \cite{CY} that we mentioned above and the work of Wang \cite{Wang}. It starts with the fact that $S$ can be written as the quotient $\Omega /\Gamma$ where $\Omega$ is a bounded convex domain in $\mathbb{R}^2$ and $\Gamma$ a subgroup of $\mathrm{SL}(3,\mathbb{R})$ acting properly discontinuously on $\Omega$,  and it makes use of the fact that $\Omega$ can be canonically identified with the affine sphere $H$ asymptotic to the open cone $\mathcal{C}\subset \mathbb{R}^3$ that sits over it, cf. \S \ref{affine}. The group $\Gamma$ can be lifted to a group acting linearly on $\mathbb{R}^{n+1}$ and preserving the affine sphere. The natural projection $\mathcal{C}\to \Omega$ induces a diffeomorphism between $H$ and $\Omega$. The affine metric associated to the affine sphere $H$ induces a Riemann surface structure on $\Omega /\Gamma$, and the cubic differential on this Riemann surface is essentially obtained by taking the difference between the Levi-Civita connection of the affine metric on $H$ and the Blaschke connection of $H$. This follows from the work of Wang \cite{Wang} who relates convex projective structures on a surface to holomorphic data. Wang worked in the setting of affine differential  geometry. He gave a condition in terms of the affine metric for a two-dimensional surface to be an affine sphere that involves conformal geometry.  See also Labourie \cite{La} and Loftin \cite{Lo1}.

The work of Labourie and Loftin has been extended to the case of noncompact surfaces of finite Hilbert volume by Benoist and Hulin \cite{BH}. 

For the relation between cubic differentials and the differential geometry of surfaces, we refer the reader to \cite{LM}.

\subsection{Length spectra as parameters} \label{s:leng}

Let $M$ be a manifold equipped with a metric $g$ and let $\mathcal{S}$ be the set of free homotopy classes of simple closed curves on $S$. The \emph{marked length spectrum}\index{marked length spectrum} of $(M,g)$ is the function on $\mathcal{S}$ which associates to each element the infimum of the lengths of closed curves in that free homotopy class. In the above definition, the adjective \emph{marked} refers to the fact that we are not only considering the set of lengths associated to the elements of $\mathcal{S}$, but we keep track of each element of $\mathcal{S}$ with its associated length parameter. In what follows, all length spectra that we consider are marked, and we shall sometimes denote marked the length spectrum by the term \emph{length spectrum}.\index{length spectrum}

The first author of this paper showed in \cite{kim1} that a normalized version of the length spectrum\index{length spectrum} for the Hilbert metric is a set of parameters for convex projective structures on surfaces, up to dual structures. Here a dual structure\index{convex structure!dual}\index{dual convex structure} is the real projective structure induced from the dual cone $\Omega^*=\{w: \langle w, v \rangle > 0, \forall v\in \overline{\Omega}-\{0\}\}$. (Note that length spectra cannot distinguish between dual structures.) This result is an analogue of the fact that the projectivization of length spectra of hyperbolic structures on a surface of finite type can be used as parameters for the Teichm\"uller space of that surface. Thurston used these parameters on Teichm\"uller space in his construction of his boundary whose elements are projective equivalence classes of measured foliations (or, equivalently, of measured laminations), see \cite{Thurston-b} and \cite{FLP}. The analogous result for convex projective structures on the surface was also used in order to define a boundary for the deformation space of these convex structures. In fact, Kim's parameters are expressed in terms of logarithms of eigenvalues of hyperbolic $3\times 3$ matrices; see \cite{kim1} for the details. We shall get back to this matter in \S \ref{geodesiccurrents}.

\section{Strictly convex manifolds and topological entropy}\label{entropy}
We already mentioned that strictly convex real projective structures have several properties which are similar to properties of negatively curved Riemannian manifolds. Let us give a few examples.

Benoist \cite{Be} proved the following fundamental theorem about strictly convex projective manifolds, relating the regularity of the boundary of the universal covering $\Omega$ to the large-scale geometry of a group that divides it.
\begin{thm}\label{th:Gromov}
Let $\Omega$ be a divisible convex  set, divided by a group $\Gamma$. Then the following are equivalent:
\begin{enumerate}
\item $(\Omega,d_\Omega)$ is Gromov-hyperbolic;
\item $\Omega$ is strictly convex;
\item $\partial \Omega$ is $C^1$;
\item The geodesic flow on $\Omega/\Gamma$ is Anosov.
\end{enumerate}
\end{thm}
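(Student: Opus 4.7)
The plan is to prove the four conditions equivalent via a cycle of implications, with projective duality and the cocompactness of the $\Gamma$-action as the two principal tools. Throughout, one uses that if $\Omega$ is divided by $\Gamma$, then the dual convex $\Omega^{*}$ is divided by the dual group $\Gamma^{*}=({}^{t}\Gamma)^{-1}$, and that under projective duality the property ``$\partial\Omega$ is $C^{1}$'' is swapped with ``$\Omega$ is strictly convex''. So (2) and (3) for $\Omega$ are mirrored by (3) and (2) for $\Omega^{*}$, and one only needs to show that cocompactness forces $\Omega$ and $\Omega^{*}$ to have the same regularity profile.

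For (2)$\Leftrightarrow$(3), I would use the dynamics of $\Gamma$ on $\partial\Omega$. A hyperbolic element $\gamma\in\Gamma$ has an attracting fixed point $x^{+}\in\partial\Omega$ and a dual fixed hyperplane $H^{-}$ tangent to $\partial\Omega$ there; if $\partial\Omega$ fails to be strictly convex, the segment in $\partial\Omega$ through $x^{+}$ is invariant under the dynamics of $\gamma$, and by pushing this information through a Margulis-type/density argument one produces a flat in $\partial\Omega^{*}$ through $H^{-}$, hence a failure of $C^{1}$ regularity for $\Omega^{*}$. Iterating this argument between $\Omega$ and $\Omega^{*}$ closes the equivalence.

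For (1)$\Rightarrow$(2), I argue by contrapositive: if $[a,b]\subset\partial\Omega$ is a non-trivial segment, one constructs two geodesic rays emanating from a common point that both limit to an interior point of $[a,b]$; since the Hilbert geodesics between points near $[a,b]$ degenerate to parallel Euclidean segments in an affine chart, the Gromov product grows without the triangles becoming thin, contradicting Gromov hyperbolicity. For (2)$\Rightarrow$(1), strict convexity alone is insufficient, but combined with cocompactness one extracts a uniform modulus of convexity: any geodesic triangle can be translated by $\Gamma$ so that a chosen vertex lies in a compact fundamental domain, and a continuity plus compactness argument on the space of triangles then produces a uniform thinness constant $\delta$. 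The equivalence of (4) with the other conditions follows standard lines: (2)+(3) give the splitting of $T(T^{1}(\Omega/\Gamma))$ into weak-stable and weak-unstable bundles constructed from horospheres in $\Omega$, with exponential contraction coming from the $C^{1}$ regularity of $\partial\Omega$; conversely an Anosov flow cannot have zero Lyapunov directions, and a boundary segment would produce exactly such flat directions, so (4)$\Rightarrow$(2).

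The main obstacle I expect is (2)$\Rightarrow$(1): the uniform Gromov hyperbolicity constant is not visible from local convexity alone, and one has to exploit the $\Gamma$-action in an essential way, together with the fact that for divisible strictly convex domains the boundary is automatically $C^{1}$ (the mutual promotion of (2) and (3)). In particular, the Gromov product estimates have to be carried out on triangles with one vertex ``near infinity'', where the proximity to $\partial\Omega$ requires the regularity of $\partial\Omega$ that itself is part of the conclusion. Breaking this circularity is what makes Benoist's theorem deep, and the cleanest way is to interleave the proofs of (2)$\Leftrightarrow$(3) and (2)$\Rightarrow$(1), establishing enough regularity of $\partial\Omega$ from cocompactness to feed back into the hyperbolicity estimate.
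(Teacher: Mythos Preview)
The paper does not contain a proof of this theorem. It is stated as a result of Benoist, with the citation \cite{Be}, and the surrounding text consists only of explanatory remarks on the notions appearing in the statement (geodesic flow, Anosov flow, Gromov hyperbolicity). There is therefore no ``paper's own proof'' against which to compare your proposal.

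As for your sketch on its own merits: the overall architecture is close to Benoist's actual argument, in particular the use of projective duality to exchange (2) and (3), and your identification of (2)$\Rightarrow$(1) as the substantial step is correct. Two points deserve caution. First, your account of (2)$\Leftrightarrow$(3) via ``pushing information through a Margulis-type/density argument'' is vague at exactly the place where the work lies; Benoist's proof uses the compactness of the space of pointed divisible convex sets (a Benz\'ecri-type result) together with duality, not a dynamical density argument of the kind you suggest. Second, in your contrapositive for (1)$\Rightarrow$(2), two rays ``limiting to the same interior point of $[a,b]$'' do not by themselves violate hyperbolicity; what one actually exploits is that a segment in $\partial\Omega$ forces the existence of genuinely fat geodesic triangles (for instance, triangles with two vertices near the endpoints $a,b$ and a third deep inside $\Omega$), or equivalently the non-uniqueness of geodesics between suitable pairs of points. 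Your instinct is right but the mechanism as written would not go through.
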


Let us make a few comments on these statements. 

Although the Hilbert metric is generally non-Riemannian, there is a natural notion of geodesic flow\index{geodesic flow} associated to the projective manifold $\Omega/\Gamma$, which is the quotient by $\Gamma$ of the geodesic flow on $\Omega$. The geodesic flow of  $\Omega$ is defined on the unit tangent bundle $T_1(\Omega)\simeq  (T\Omega\setminus\{0\})/\R^*_+$. This is the space of pairs $(x,\xi)$ where $x$ is a point in $\Omega$ and $\xi$ is a vector of length one based at $x$ (remember that for strictly convex manifolds, the projective lines are exactly the Euclidean lines) with respect to the Hilbert metric. The geodesic flow $\phi^t:T_1(\Omega) \to T_1(\Omega)$ is then obtained by moving the pair $(x,\xi)$ unit length in the direction specified by $\xi$. The quotient flow, denoted by the same name, $\phi^t:T_1(\Omega/\Gamma) \to T_1(\Omega/\Gamma)$, is the geodesic flow associated to $\Omega/\Gamma$.

We recall that a $C^1$ flow $f^t:W\to W$ on a Riemannian manifold $W$ is said to be Anosov\index{Anosov flow}\index{flow!Anosov} if there is a splitting of the tangent space at each point $v$ into three subspaces  
\[T_vW=E^u(v)\oplus E^s(v)\oplus \mathbb{R}^*(X)\]
where $E^u, E^s$ have positive dimension, $E^u$ expanding under the flow, $E^s$ contracting and $X$ the generator of the flow. A typical Anosov flow is the geodesic flow on the unit tangent bundle of a manifold of negative curvature. Anosov flows have interesting dynamical properties, for instance, the union of periodic orbits are dense. The definition of an Anosov flow can be transcribed in a straightforward way to the setting of the Finsler manifolds $\Omega$ and $\Omega/\Gamma$.

We also recall that Gromov hyperbolicity is a property of the large-scale geometry of a metric space.\index{hyperbolic metric space}\index{metric!Gromov hyperbolic} In the case of a geodesic metric space\index{geodesic metric space} (that is, a metric space in which distances between points are realized by length of curves joining them), the property says that triangles are uniformly $\delta$-thin,\index{triangle!$\delta$-thin} that is, that there exists a $\delta>0$ such that for any triangle, any side is contained in the $\delta$-neighborhood of the two others. See \cite{Gromov} and \cite{CDP}.

It follows from Theorem \ref{th:Gromov} that convex real projective structures on a given closed manifold $M$ are either all strictly convex or they are all non-strictly convex.

The \emph{topological entropy}\index{topological entropy}\index{entropy!topological} of a flow $\phi^t:W\ra W$ on a compact manifold $W$ with distance function $d$ is defined as follows.

For all $t\geq 0$, let $d_t$ be the function on $W\times W$ defined by
$$d_t(x,y)=\max_{0\leq s\leq t} d(\phi^s(x),\phi^s(y))$$
for any two points $x$ and $y$ in $W$.
It is not difficult to see that $d_t$ satisfies the properties of a distance function.

 For any $\epsilon>0$ and $t\in\R$, we consider coverings of $W$ by open sets of diameter less than $\epsilon$ with respect to the distance $d_t$ and we let $N(\phi,t,\epsilon)$ be the minimal cardinality of such a covering. 
 
 The topological entropy of $\phi$ is then defined as
$$h_{\mathrm{top}}(\phi)=\lim_{\epsilon\ra 0}\big(\limsup_{t\ra\infty}\frac{1}{t}\log N(\phi,t,\epsilon)\big).$$

For a strictly convex compact manifold $M$, one can consider the associated geodesic flow on the unit tangent bundle 
$T_1M$ of $M$. The strict convexity ensures that in the local projective charts, any geodesic is contained in a projective line, and therefore there are unique geodesics between any two given points.

Benoist started the study of this flow in \cite{Be} and he showed that it is Anosov and topologically mixing, generalizing properties which were known  to hold for the geodesic flow associated to a hyperbolic manifold. We recall that a flow $f^t$ on a space $M$ is said to be \emph{topologically mixing}\index{flow!topologically mixing}\index{topologically mixing flow} if for any two open sets $A$ and $B$ in $M$, there exists a real number $t_0$ such that for every $t>t_0$, we have $f^t(A)\cap B\not=\emptyset$. An Anosov flow is not necessarily topologically mixing and vice versa.

There are several interesting global questions concerning the long-term behaviour of orbits and more generally the dynamics and the ergodic theory of Anosov flows. Some of these questions are considered by Crampon \cite{crampon} who continued the study initiated by Benoist. 
Crampon showed the following result on entropy:
\begin{thm}Let $\phi^t$ be the geodesic flow of the Hilbert metric on a strictly convex projective compact manifold $M$ of dimension $n$. Its topological entropy satisfies
$$ h_{\mathrm{top}}(\phi^t)\leq (n-1),$$ with equality if and only if $M$ is a hyperbolic manifold with constant curvature $-1$.
\end{thm}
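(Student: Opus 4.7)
The plan is to split the proof into three parts: an upper bound via volume entropy, the volume-entropy estimate itself, and the rigidity analysis in the equality case.

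First I would reduce the topological entropy to a volume-growth quantity. Since $M=\Omega/\Gamma$ is strictly convex and compact, Theorem \ref{th:Gromov} guarantees that $(\Omega,d_\Omega)$ is Gromov hyperbolic and that $\phi^t$ is Anosov. In this Anosov setting, Manning's inequality (adapted by Crampon to the Finsler/Hilbert context) gives
$$h_{\mathrm{top}}(\phi^t)\ \leq\ h_{\mathrm{vol}}(\Omega,d_\Omega),\qquad h_{\mathrm{vol}}(\Omega,d_\Omega):=\limsup_{R\to\infty}\frac{1}{R}\log\mu_\Omega\!\left(B(x_0,R)\right),$$
where $x_0\in\Omega$ is any basepoint and $\mu_\Omega$ is the Busemann volume. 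Hence the problem reduces to the purely geometric estimate $h_{\mathrm{vol}}(\Omega,d_\Omega)\leq n-1$.

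Next I would prove the volume-growth bound. A Hilbert ball $B(x_0,R)$ sits inside an affine-patch region whose Euclidean size is controlled by the value $\tanh R$ times the distance from $x_0$ to $\partial\Omega$ along the relevant line, while the Finsler norm in formula (\ref{norm}) blows up precisely like $1/\mathrm{dist}(\cdot,\partial\Omega)$ near the boundary. Integrating in polar-type coordinates adapted to a supporting simplex of $\Omega$ (or, equivalently, using the Benoist--Hulin comparison of Proposition \ref{Hilbert-affine} to replace $\mu_\Omega$ by the Blaschke volume of the affine sphere and then computing there) yields $\mu_\Omega(B(x_0,R))\leq C_n\, e^{(n-1)R}$ for a dimensional constant $C_n$. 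The sharpness of the rate $n-1$ is clear from the hyperbolic model, where balls of radius $R$ in $\mathbb{H}^n$ have volume asymptotic to a constant times $e^{(n-1)R}$. Combining with the first step gives the inequality $h_{\mathrm{top}}(\phi^t)\leq n-1$.

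The rigidity statement is the main obstacle, and this is where I would spend the most care. If $h_{\mathrm{top}}(\phi^t)=n-1$, equality propagates through the chain and forces $h_{\mathrm{vol}}(\Omega,d_\Omega)=n-1$, which in turn forces the volume comparison above to be sharp at every scale. The plan is to translate this sharpness into a regularity statement for $\partial\Omega$: the saturation of the exponential rate is equivalent, after rescaling the Finsler unit balls near a boundary point via the $\mathrm{SL}^{\pm}(n+1,\mathbb{R})$-action on the Benz\'ecri body space (cf.\ the discussion preceding Corollary \ref{comparison}), to the infinitesimal unit ball converging to a Euclidean ellipsoid at almost every boundary point. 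By Benoist's regularity results for divisible convex sets, this upgrades $\partial\Omega$ to $C^2$ regularity, and then Benz\'ecri's theorem (mentioned in the introduction) forces $\partial\Omega$ to be an ellipsoid, so that $M=\Omega/\Gamma$ is isometric to the Cayley--Klein--Beltrami model of a hyperbolic manifold of constant curvature $-1$. Conversely, in the ellipsoid case the geodesic flow is the classical hyperbolic one with $h_{\mathrm{top}}=n-1$, so both directions of the equivalence follow.
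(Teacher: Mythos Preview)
The paper does not actually prove this theorem: it is stated as a result of Crampon \cite{crampon} and the reader is referred there and to Chapter~7 of the volume. So there is no in-paper proof to compare against; the relevant comparison is with Crampon's argument.

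Crampon's route is quite different from yours. He does not bound $h_{\mathrm{top}}$ via volume growth. Instead he uses the Anosov structure and Oseledets/Pesin theory: for the Bowen--Margulis measure $\mu_{BM}$ (which realizes the topological entropy), Ruelle's inequality gives $h_{\mathrm{top}}=h_{\mu_{BM}}\le \int \chi^+\,d\mu_{BM}$, and he computes the sum of positive Lyapunov exponents of the Hilbert geodesic flow explicitly as $(n-1)$ minus a nonnegative defect term coming from the ``parallel transport'' along unstable directions. The defect vanishes exactly when $\partial\Omega$ is $C^2$ almost everywhere, which in the divisible case forces $\partial\Omega$ to be an ellipsoid by Benoist/Benz\'ecri. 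This gives both the bound and the rigidity in one stroke.

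Your proposal has a genuine gap at the very first step. Manning's inequality goes the other way: for a compact manifold one has $h_{\mathrm{vol}}\le h_{\mathrm{top}}$, not $h_{\mathrm{top}}\le h_{\mathrm{vol}}$. So bounding $h_{\mathrm{vol}}$ by $n-1$ gives no upper bound on $h_{\mathrm{top}}$. What is true in this setting is the \emph{equality} $h_{\mathrm{top}}=h_{\mathrm{vol}}$, but that is not Manning's inequality; it is an additional theorem (also due to Crampon in the Hilbert context) that itself requires the Anosov dynamics and is not a free reduction. Even granting that equality, your volume-growth sketch (``polar coordinates adapted to a supporting simplex'' or passage to the Blaschke metric) does not yield the rigidity: knowing only that $h_{\mathrm{vol}}=n-1$ does not by itself force sharpness of the ball-volume comparison ``at every scale'', nor does it give $C^2$ regularity of $\partial\Omega$ without exactly the Lyapunov-exponent analysis that Crampon performs. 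So as written, the inequality step is misdirected and the rigidity step is not substantiated.
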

Some of the results of Crampon are developed in detail in Chapter 7 of this volume \cite{Cr}. 

The regularity of the boundary of the convex set plays an important role in the study of the geodesic flow.

\section{Geodesic currents on strictly convex surfaces}\label{geodesiccurrents}

We recalled in \S \ref{s:leng} that it was proved in \cite{kim1} that if two strictly convex compact real projective manifolds have the same length spectrum with respect to the Hilbert metric, then they are projectively equivalent, up to dual structures. Thus, the length spectrum can be used as a parameter space for real projective structures up to dual structures. The length spectrum, and more generally, the set of geodesics in the manifold, or the space of geodesics in the universal cover equipped with the action of the fundamental group, can be studied from various points of view: dynamical, measure theoretical, etc. We shall consider more particularly the case where the manifold is two-dimensional. We start by recalling the notion of geodesic current.

Let $S=\Omega/\Gamma$ be a closed surface of genus at least two equipped with a strictly convex real projective structure.
A {\it  geodesic current}\index{geodesic current}\index{current} on $S$ is a $\Gamma=\pi_1(S)$-invariant Borel measure on the set 
\[(\partial\Omega\times \partial\Omega\setminus
\mathrm{Diag})/\mathbb{Z}_2\] where $\mathrm{Diag}$ is the diagonal set and where $\mathbb{Z}_2$ acts by interchanging the coordinates. Equivalently, a geodesic current is an invariant transverse measure for the geodesic flow on the unit tangent bundle of $S$. The unit tangent bundle $T_1 S$ of $S$ can be seen as the quotient of the set of bi-infinite geodesics $\Omega$ by the action of $\Gamma$. The term ``geodesic" denotes here the lines in the sense of projective geometry, but we also recall that in the case where $\Omega$ is strictly convex (and in fact, it suffices that $\partial \Omega$ does not contain two affinely independent non-empty open segments), the set of projective geodesics coincides with the set of geodesics (in the sense of distance-minimizing curves) for the associated Hilbert metric. The equivalence between the  above two definitions of a geodesic current is based on the fact that from any two distinct points in $\partial \Omega$ there is a unique projective geodesic in $\Omega$ having these points as endpoints. The invariant transverse measures for the geodesic flow of $S$ are also the $\Gamma$-invariant invariant transverse measures of the geodesic flow on $\Omega$. 

There are methods for obtaining invariant measures for geodesic flows and we shall mention the Bowen-Margulis measure below. In the setting of Riemannian manifolds of negative curvature, there are classical methods for constructing transverse measures for geodesic flows, and the theory of such transverse measures is well developed. The methods have been adapted by several authors to the case of Hilbert geometry. For more information on this subject, we refer the reader to the chapter by Crampon in this volume \cite{Cr}. 
 
 Let $\rho:\pi_1(S)\ra \mathrm{SL}(3,\R)$ be a holonomy representation associated to a real projective structure on $S$. We now explain a method of constructing geodesic currents which originates in the work of Ledrappier \cite{Le}. 
 
 We start with the fact that for a strictly convex projective structure, the boundary $\partial \Omega$ is $C^1$, and we recall the definition of a Busemann  cocycle $B$ in this context, cf. \cite{kim1}. 

For a fixed base point $o\in\Omega$ and for $\xi\in \partial \Omega$, choose a unit speed geodesic ray $r(t)$ with $r(0)=o$ and $r(\infty)=\xi$, and let
 $$B_\xi(o, y)=\lim_{t\ra\infty}\left( d_\Omega(y,r(t))-t\right).$$
 The map $t\mapsto d_\Omega(y,r(t))-t$ is non-increasing and bounded from below, therefore the above limit exists. This function is called the \emph{Busemann function}\index{Busemann function} associated to the geodesic ray $r$. Since the geodesic ray between a point in $\Omega$ and a point in $\partial \Omega$ is unique, one can think of the Busemann function as a function on $\partial \Omega$ that depends on the choice of a basepoint in $\Omega$. The function changes by an additive constant when we change the basepoint. When $\Omega$ is the unit disc (i.e. the hyperbolic space), the value of $B_\xi(o, y)$ is the signed distance between the two horospheres based at $\xi$ and passing by $o$ and by $y$. One also talks about the \emph{Busemann cocycle}\index{Busemann cocycle} $B_\xi(o, r^{-1}o)$ associated to a point $\xi\in \partial \Omega$; the reason for this terminology  is the cocycle property expressed in Equation (\ref{cocycle}) below.

 We refer the reader to \cite{Pap} for a systematic presentation of several properties of Busemann functions.

  The group $\Gamma$, being isomorphic to the fundamental group of a closed surface of genus $\geq 2$, is hyperbolic in the sense of Gromov (meaning that its Cayley graph with respect to some -- or, equivalently to any -- finite generating system, equipped with the word metric, is a hyperbolic geodesic metric space). Such a group has a well-defined Gromov boundary. Such a boundary  carries a natural H\"older structure, see \cite{Gromov} and \cite{CDP}.  We shall also use the canonical identification between the two boundary spaces $\partial\Omega$ and $\partial \Gamma$.
 Since the surface $S$ is closed, each element $\gamma\in\pi_1(S)\simeq \Gamma$ is hyperbolic, that is, it acts  on $\Omega$ without fixed point leaving invariant a unique geodesic of that space and acting as a translation along that geodesic, with one attractive and one repelling fixed point in $\partial \Omega$, the two endpoints of the invariant geodesic.
 The \emph{translation length}\index{translation length}\index{length!translation} of $\gamma$, denoted by $\ell(\gamma)$, is defined as 
 \[\ell(\gamma)=\inf_{x\in \Omega} d_{\Omega} (x,\gamma(x)).\]
 Checking the translation length of a group element is generally a pleasant exercise.

 The attracting fixed point at infinity of $\gamma$ is denoted by $\gamma^+$.

  For each $\xi\in\partial \Omega$, consider a Busemann cocycle
$$(\gamma, \xi)\mapsto B_\xi(o,\gamma^{-1}o),$$
 where $o\in\Omega$ is a fixed base point.
Then
$$B_\xi(o,\gamma_1^{-1}\gamma_0^{-1}o)=B_{\gamma_1 \xi}(\gamma_1 o,\gamma_0^{-1} o)=B_{\gamma_1 \xi}(o,\gamma_0^{-1}o)+B_{\gamma_1 \xi}(\gamma_1 o,o)$$$$=B_{\gamma_1 \xi}(o,\gamma_0^{-1}o)+B_\xi(o,\gamma_1^{-1}o).$$
Hence, if we set $c(\gamma,\xi)=B_\xi(o,\gamma^{-1}o)$, we have
\begin{equation}\label{cocycle}c(\gamma_0\gamma_1,\xi)=c(\gamma_0,\gamma_1\xi)+c(\gamma_1,\xi).
\end{equation}
This map $c:\Gamma\times \partial\Gamma\ra \R$ is a H\"older cocycle,\index{H\"older cocycle} that is, besides the cocycle property given by the preceding equation, $c(\gamma,\cdot)$ is a H\"older map for every $\gamma\in\Gamma$.
The \emph{period}\index{period!cocycle} of $c$ at $\gamma$ is defined to be
$$\ell_c(\gamma)=c(\gamma,\gamma^+).$$ 
The reason for this terminology is that two H\"older cocycles are cohomologous if and only if they have the same periods (see Theorem 1.a of \cite{Le}).

In our case,
$$\ell_c(\gamma)=B_{\gamma^+}(o,\gamma^{-1}o)=\ell(\gamma)=\ell(\gamma^{-1})=\ell_c(\gamma^{-1}).$$ Hence the set of periods of $c$ is just the length spectrum of the real projective structure with respect to the Hilbert metric. Such a cocycle $c$ (that is, a cocycle satisfying the property  $\ell_c(\gamma)=\ell_c(\gamma^{-1})$),  is said to be even.

The exponential growth rate of a H\"older cocycle $c$ is defined as
$$h_c=\limsup_{s\ra\infty} \frac{\log \#\{[\gamma]\in [\Gamma]:\ell_c(\gamma)\leq s\}}{s}$$ where $[\gamma]$ denotes the conjugacy class of $\gamma$.
In \cite{Le}, it is shown that if $0<h_c<\infty$ then there exists an associated Patterson-Sullivan\index{Patterson-Sullivan meaure}\index{measure!Patterson-Sullivan} measure on $\partial \Gamma$,  i.e., a probability measure
$\mu$ on $\partial \Gamma$ such that
$$\frac{d\gamma_*\mu}{d\mu}(\xi)=e^{-h_c c(\gamma^{-1},\xi)}.$$
In the case at hand, $h_c$ is just the growth rate of lengths of closed geodesics in the Hilbert metric.

We recalled the definition of the topological entropy of a flow in \S \ref{entropy}. Now we need the notion of \emph{volume entropy}\index{entropy!volume}\index{volume entropy} of a metric. This is a measure of the asymptotic growth rate of volumes of metric balls. We recall that the Hilbert metric on the convex set $\Omega$ equips this set with a notion of volume, viz., the Hausdorff measure associated to the metric. It is called the Hilbert volume (see \S \ref{s:preliminaries}). This notion of volume descends to the quotients of $\Omega$ by properly discontinuous actions of groups of projective transformations. We choose a point $x$ in $\Omega$. The \emph{volume entropy}, $h_{\mathrm{vol}}$ of the metric is then defined as
\[
h_{\mathrm{vol}} = \lim_{r\to\infty}\frac{1}{r} \log \mathrm{vol}(B(x,r))
\]
if this limit exists, 
where $B(x,r)$ denotes the open ball of center $x$ and radius $r$.

The geodesic flow on the unit tangent bundle of a strictly convex real projective surface is $C^1$ and Anosov. Therefore a classical theorem of Bowen \cite{Bowen} applies to conclude that
the topological entropy $h_{\mathrm{vol}}$ is equal to the growth rate of lengths of closed geodesics. More precisely, if $N(T)$ denotes the number of closed geodesics in $\Omega/\Gamma$ and $h_{\mathrm{top}}$ the topological entropy of the flow, we have 
\[N(T)\simeq \frac{e^{h_{\mathrm{top}}T}}{h_{\mathrm{top}}T}.
\]

It is shown in \cite{crampon} that
$$0< h_{\mathrm{vol}}\leq h_{\mathrm{hyp}}=1$$ where $h_{\mathrm{hyp}}$ is the topological entropy of a hyperbolic structure. 

Now we summarize some general facts about the theory of H\"older cocycles and Patterson-Sullivan measures associated to real projective structures.

Let $\Gamma$ be a properly discontinuous action of a group of projective transformations on a convex set $\Omega$.
 A family of finite Borel measures $\{\nu_x\}_{x\in \Omega}$ defined  on $\partial{ \Omega}$ is said to be 
  an \emph{$\alpha$-conformal density}\index{conformal density}\index{$\alpha$-conformal density} (or a \emph{conformal density of dimension $\alpha$}) for $\Gamma$  if any two metrics in this family are equivalent (that is, if they have the same measure-zero sets) and if they satisfy the
 following properties:
 \begin{enumerate}
 \item \label{P1} { $\displaystyle \frac{d\nu_y}{d\nu_x}(\zeta)=e^{-\alpha B_{\zeta}(x,y)}$ where
 $B_{\zeta}(x,\cdot)$ is the Busemann function based at $\zeta$ such that
 $B_{\zeta}(x,x)=0 \ \forall x,y \in \Omega$ ;}
\item{ $\displaystyle \frac{d\nu_{\gamma x}}{d\nu_x}(\zeta)=\frac{d\nu_x}{d\nu_{\gamma^{-1} x}}
 (\gamma^{-1} \zeta)  \ \forall x,y \in \Omega$ and $\gamma\in \Gamma$;}
 \item $\gamma_*\nu_x=\nu_{\gamma x}  \ \forall x \in \Omega$ and $\gamma\in \Gamma$.
\end{enumerate}
Condition (\ref{P1}) expresses the fact that the Radon-Nikodym cocycles $\displaystyle \frac{d\nu_y}{d\nu_x}(\zeta)$ of the family of measures $\{\nu_x\}_{x\in \Omega}$ are (up to a constant) equal to the Busemann cocycles of the convex set $\Omega$. We refer the reader to the paper \cite{Pat} of Patterson and \cite{Sulli} of Sullivan for the original ideas behind the introduction of conformal densities.

Given an $\alpha$-conformal density $\{\nu_x\}_{x\in \Omega}$ on $\partial{ \Omega}$, the measure
$$dU(\zeta,\eta)=dU_x(\zeta,\eta)=e^{2\delta(\Gamma)(\zeta,\eta)_x}d\nu_x(\zeta)d\nu_x(\eta)$$
is a $\Gamma$-invariant measure on $\partial \Omega\times\partial \Omega$ which is
independent of $x\in \Omega$, where $(\zeta,\eta)_x$ denotes the quantity
\[(\zeta,\eta)_x=-B_\zeta(x,z)-B_\eta(x,z)
\]
 for any $z$ in $\Omega$ and where $\delta(\Gamma)$ is equal to the volume entropy of the associated Hilbert metric. (The quantity $(\zeta,\eta)_x$ is also related to the Gromov product of $\zeta$ and $\eta$ with basepoint $x$.)

 Finally
$$dUdt$$ is a geodesic flow invariant measure on $T_1M$, where
$M=\Omega/\Gamma$, and it is called a \emph{Bowen-Margulis measure}.\index{measure!Bowen-Margulis}\index{Bowen-Margulis measure} In the classical theory of dynamical systems, the Bowen-Margulis measure is an invariant 
measure for a hyperbolic system (in particular, for an Anosov flow). In the case of a mixing Anosov flow (like the geodesic flow of a compact hyperbolic manifold), the Bowen-Margulis measure maximizes entropy. In this sense, it provides a very good description of the complexity of the dynamical system. There is a construction of the Bowen-Margulis measure using the Patterson-Sullivan techniques and we shall talk about this below.

Since we are interested in the group $\Gamma$ itself rather than $\Omega$, a Patterson-Sullivan measure is
$$\frac{d\gamma_*\mu_o}{d\mu_o}(x)=e^{-h_c c(\gamma^{-1},x)}$$ for some H\"older cocycle $c$. Henceforth we will omit the base point o.
Two measures $\mu$ and $\mu'$ are equivalent if and only if two associated cocycles  $c$ and $c'$ have the same periods \cite{Le}.
The Patterson-Sullivan geodesic current\index{Patterson-Sullivan geodesic current} associated to the H\"older cocycle $c$ is
$$dm_c(x,y)=e^{2h_c(x,y)_o} d\mu(x)d\mu(y).$$ In \cite{Le}, it is shown that there is a 1-1 correspondence between H\"older cocycles and Patterson-Sullivan geodesic currents.

From now on, we denote by $\cal G$ the set of Patterson-Sullivan geodesic currents defined by H\"older cocycles. The space of geodesic currents, as a space of measures, is equipped with a natural weak${}^*$ topology of convergence on continuous functions. 
Note that if $\rho^*$ is a dual structure of $\rho$, since $\rho^*$ has the same marked length spectrum as $\rho$, the cocycle defined by $\rho^*$ is the same as the one defined by $\rho$.  By associating a H\"older cocycle $c_\rho=B_{(\cdot)}(o,\cdot)$ and the Patterson-Sullivan geodesic current $\mu_\rho$ to the projective structure $\rho$, we obtain a map from the moduli space $\cal D$ of strictly convex structures on $S$
$$P:\cal D\ra \cal G.$$ This map is 1-1 on Teichm\"uller space and 2-1 elsewhere \cite{kim1}. The preimage of a point consists in two dual projective structures.
The interested reader will find more details in  \cite{kim4}.

\section{Compactification of the deformation space of convex real projective structures}

Extensive work has been done on the compactification of Teichm\"uller space and of Riemann's moduli space, and more recently on that of the character varieties of representations of fundamental groups of surfaces into various Lie groups.  Several possible compactifications and boundary constructions of Teichm\"uller space have been obtained. Some of them use hyperbolic geometry (Thurston's compactification, the horofunction boundary construction, etc.), others compactifications use complex structures (Teichm\"uller's compactification, the Bers compactification, etc.), and others use algebraic geometry (e.g. the Morgan-Shalen compactification). There are even others. Each compactification captures some essential properties of the space. It is then natural to try to construct compactifications of moduli spaves projective structures. 

  A compactification of the deformation space of strictly convex real projective structures on a manifold has been constructed by the first author of this paper in \cite{kim2}. It is related to  Hilbert geometry in the sense that it uses the length spectrum of this metric, in the same way as Thurston's geometric compactification $\overline {\cal T}$ of Teichm\"uller space $\cal T$ uses the hyperbolic length spectrum. The compactification of the deformation space $\cal D$ of strictly convex real projective structures is however a bit mysterious compared to Thurston's compactification of Teichm\"uller space. 
  Loftin developed in \cite{Lo1} another compactification of $\cal D$ which is based on holomorphic cubic differentials on degenerate (noded) surfaces.

  Since Teichm\"uller space $\cal T$ is a subspace of $\cal D$, one can naturally expect that the compactification of $\cal D$ obtained by using the length spectrum should include the Thurston boundary consisting of projective measured laminations or, equivalently, of actions of the fundamental group of the surface on $\mathbb{R}$-trees. The compactification in \cite{kim2} is done in terms of the geometry of $X=\mathrm{SL}(3,\R)/\mathrm{SO}(3)$ by regarding a real projective structure as a
holonomy representation $\rho:\pi_1(S)\ra \mathrm{SL}(3,\R)$. A boundary point of the compactification
is then either a reducible representation or a limit representation which acts on the asymptotic cone of $X$.
Specifically if $\rho_i:\pi_1(S)\ra \mathrm{SO}(2,1)\subset \mathrm{SL}(3,\R)$ is a sequence of hyperbolic structures which
converges to a projective lamination $\lambda$ in Thurston's compactification, the limit action of $\rho_i$ converges to the affine building $\R\times T_\lambda$, where $T_\lambda$ is the real tree dual to the measured lamination $\lambda$.
In this way, the space of projective measured laminations appears naturally as a subset of the boundary of the deformation space of strictly convex real projective structures.

On the other hand, the geodesic currents introduced in Section \ref{geodesiccurrents} are natural generalizations of measured laminations.
Hence we may also use the space $\mathcal{G}$ of geodesic currents to compactify $\cal D$.  On $\cal G$, one can define an intersection form
$$i:\cal G\times \cal G\ra \R^+,$$ so that $i(\alpha,\beta)$  is the total mass of the product measure $\alpha\times\beta$ on the set of pairs of transversal geodesics on $S$. This is a natural generalization of the intersection form $i(\gamma_1,\gamma_2)$ when $\gamma_1$ and $\gamma_2$ are measured geodesic laminations.  The geodesic current $\mu_m$ associated to the Hilbert metric $m$ of a strictly convex real projective structure is properly normalized as:
$$i(\mu_m,\mu_m)=\frac{\pi}{2}\mathrm{Area}(m),$$ like in the hyperbolic case \cite{Bo}.
This compactification by geodesic currents satisfies the  following (see \cite{kim4}):
\begin{thm}The two compactifications of the space of strictly convex
real projective structures on a closed surface $S$, the one defined via the marked
length spectrum and the other via geodesic currents,  are naturally
homeomorphic.
\end{thm}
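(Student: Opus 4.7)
The strategy is to exhibit an explicit continuous map between the two compactifications and deduce that it is a homeomorphism from a compactness and density argument. Write $\overline{\mathcal{D}}^{\,\ell}$ for the length-spectrum compactification and $\overline{\mathcal{D}}^{\,c}$ for the compactification by projectivized geodesic currents. Both are obtained as closures of the image of $\mathcal{D}/\!\sim$, where $\sim$ identifies dual pairs (on which the marked length spectrum and the Patterson--Sullivan current both coincide), under the embeddings $[\rho]\mapsto[\ell_\rho]\in\mathbb{P}(\mathbb{R}_{\geq 0}^{[\Gamma]})$ and $[\rho]\mapsto[\mu_\rho]\in\mathbb{P}(\mathcal{G})$. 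After normalizing currents, for instance by fixing $i(\mu,\mu_{\rho_0})=1$ against a reference $\mu_{\rho_0}$, both ambient spaces are compact Hausdorff.

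The bridge between the two pictures is the identity
\[
\ell_\rho(\gamma)\;=\;c\cdot i(\mu_\rho,\delta_\gamma)\qquad\text{for every }\gamma\in\Gamma\setminus\{1\},
\]
with $c$ a universal constant and $\delta_\gamma$ the geodesic current concentrated on the closed Hilbert geodesic of $\gamma$. This is an instance of the general principle that the period of a Hölder cocycle equals an intersection number against a closed-geodesic current. I would prove it by unwinding the definitions of \S\ref{geodesiccurrents}: $\mu_\rho$ is the $\Gamma$-invariant product measure $dU_o$ built from the Patterson--Sullivan density dual to the cocycle $c_\rho$, whose period at $\gamma$ is $\ell_\rho(\gamma)$, and evaluating $\mu_\rho\times\delta_\gamma$ on a fundamental transverse section to the axis of $\gamma$ recovers exactly this period. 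The $C^1$ regularity of $\partial\Omega$ supplied by Theorem \ref{th:Gromov} is used to guarantee that the axis is well defined and isolated from nearby transverse geodesics.

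With the identity at hand, define
\[
\Phi:\overline{\mathcal{D}}^{\,c}\longrightarrow\overline{\mathcal{D}}^{\,\ell},\qquad
[\mu]\longmapsto\bigl[\gamma\mapsto i(\mu,\delta_\gamma)\bigr].
\]
On the dense subset $\mathcal{D}/\!\sim$ the identity forces $\Phi([\mu_\rho])=[\ell_\rho]$, so $\Phi$ extends the identification already present between the two embeddings. Continuity of $\Phi$ reduces to continuity of $\mu\mapsto i(\mu,\delta_\gamma)$ in the weak-$*$ topology for each fixed $\gamma$, which follows because this pairing is the integral, against $\mu$, of a bounded continuous function supported on a compact transverse cross-section to the axis of $\gamma$. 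Surjectivity of $\Phi$ is automatic, since its image is compact and contains the dense subset $\mathcal{D}/\!\sim$ of $\overline{\mathcal{D}}^{\,\ell}$.

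The main obstacle is injectivity on the boundary: one must show that if $[\mu],[\mu']\in\overline{\mathcal{D}}^{\,c}$ satisfy $i(\mu,\delta_\gamma)=\lambda\,i(\mu',\delta_\gamma)$ for every $\gamma\in\Gamma$ and some $\lambda>0$, then $[\mu]=[\mu']$. The plan is to establish density of the projective closed-geodesic classes $\{[\delta_\gamma]:\gamma\in\Gamma\}$ in $\mathbb{P}(\mathcal{G})$, whence two currents with proportional intersection functions must be proportional by continuity and bilinearity of $i$. Density of closed-geodesic currents follows from equidistribution of closed Hilbert geodesics, which in turn rests on the Anosov property and topological mixing of the geodesic flow of a strictly convex real projective manifold, established by Benoist \cite{Be} and developed by Crampon \cite{crampon}; together with Bonahon's approximation scheme \cite{Bo}, these ingredients reproduce the required density and separation statement in the Hilbert setting. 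A continuous bijection between compact Hausdorff spaces being a homeomorphism, this concludes the plan.
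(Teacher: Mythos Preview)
Your overall architecture---build a continuous map from the current-side compactification to the length-side compactification, then use compactness/Hausdorff to upgrade to a homeomorphism---matches the paper's. But the paper implements the map very differently, and the difference exposes a real gap in your argument.

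The paper does not go through the intersection form. It uses Ledrappier's dictionary directly: by \cite{Le} there is a bijection between H\"older cocycles (up to cohomology) and Patterson--Sullivan geodesic currents, and two cocycles are cohomologous iff they have the same periods. Hence the map $\ell:\mathbb P\mathcal G\to\mathbb P(\mathbb R_+^{\mathcal S})$ which sends a PS current to the period function of its associated cocycle is injective, and since the Busemann cocycle $c_\rho$ has periods $\ell_\rho(\gamma)$, the triangle $L=\ell\circ\mathbb P P$ commutes on the nose. That is the whole argument: no Bonahon-type length formula, no equidistribution, no density of closed-geodesic currents.

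Your route instead rests on the claimed identity $\ell_\rho(\gamma)=c\cdot i(\mu_\rho,\delta_\gamma)$, and this is the step that is not justified. The current $\mu_\rho$ in \S\ref{geodesiccurrents} is the Bowen--Margulis/Patterson--Sullivan current attached to the Busemann cocycle at the critical exponent $h_c$, not a Liouville current. Bonahon's formula $i(L_m,\delta_\gamma)=\ell_m(\gamma)$ is specific to the Liouville current; for Bowen--Margulis currents in nonconstant curvature (and a fortiori in non-Riemannian Hilbert geometry) there is no reason for $\gamma\mapsto i(\mu_\rho,\delta_\gamma)$ to be proportional to the length spectrum. Your heuristic ``evaluating $\mu_\rho\times\delta_\gamma$ on a fundamental transverse section recovers the period'' conflates the period of the cocycle with the transverse PS-mass over a fundamental domain of the axis; these are different quantities in general. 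Without that identity, $\Phi$ no longer agrees with the length-spectrum embedding on $\mathcal D/\!\sim$, and the surjectivity/compatibility part of your argument collapses.

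If you want to keep the intersection-number viewpoint, you would need to replace $\mu_\rho$ by a current for which the Bonahon-type identity genuinely holds in Hilbert geometry, and then check that \emph{that} current still separates structures up to duality. The paper sidesteps all of this: since it only needs $\ell$ on PS currents, Ledrappier's theorem gives injectivity for free, and the density/equidistribution machinery you invoke is unnecessary.
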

\begin{proof}[Sketch of proof] Look at the following diagram
$$
\xymatrix{
\cal D \ar[r]^{L}\ar[rd]_{\mathbb P P} & {\mathbb P( \mathbb {R}_+}^{\cal S})  \\
& \mathbb P\cal G \ar[u]^{\ell}}
\;\;
$$
where $\cal S$ is the set of conjugacy classes of elements in $\pi_1(S)$ and $L$ is the marked length  spectrum map relatively to the Hilbert metric. The map $\ell$ is defined using the periods of the corresponding H\"older cocycles.  The diagram
 commutes, and the map $\ell$ is injective, hence the compactifications of the images of $L$ and $\mathbb P P$ are homeomorphic.
\end{proof}

\begin{co}  Thurston's compactification $\overline{\cal T}$ by projective measured laminations is contained in
$\overline{\mathbb P P(\cal D)}$.
\end{co}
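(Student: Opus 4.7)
The plan is to deduce this from the preceding Theorem together with Bonahon's classical embedding of Teichm\"uller space into the space of geodesic currents. By the Cayley-Klein-Beltrami description, a hyperbolic structure on $S$ is the same as a convex real projective structure whose convex set is the interior of an ellipse, and the Hilbert metric on that ellipse coincides with the hyperbolic metric. This gives a canonical inclusion $\cal T\hookrightarrow \cal D$ (into the strictly convex part of the deformation space), through which the marked length spectrum of $\rho\in\cal T$ as a hyperbolic structure agrees with its marked Hilbert length spectrum as a projective structure.

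First I would verify that on $\cal T$ the map $\mathbb P P:\cal D\to \mathbb P\cal G$ introduced in Section \ref{geodesiccurrents} recovers Bonahon's embedding. Concretely, for a hyperbolic structure $\rho$, the H\"older cocycle $c_\rho=B_{(\cdot)}(o,\cdot)$ built from the Hilbert--Busemann function is the hyperbolic Busemann cocycle, its exponential growth rate $h_{c_\rho}$ equals the hyperbolic volume entropy, and the associated Patterson--Sullivan density is (up to constant) the Lebesgue measure on $\partial \Omega=S^1$. The resulting geodesic current $dm_{c_\rho}=e^{2h_{c_\rho}(x,y)_o}d\mu(x)d\mu(y)$ is therefore proportional to Bonahon's Liouville current, so its projective class equals that produced by the Liouville embedding.

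Next I would invoke Bonahon's theorem (\cite{Bo}) that the closure of the image of Teichm\"uller space in the projectivized space of geodesic currents is precisely Thurston's compactification, with boundary the projective measured laminations. This identifies $\overline{\mathbb P P(\cal T)}$ inside $\mathbb P\cal G$ with $\overline{\cal T}$. Since $\cal T\subset\cal D$, we have $\mathbb P P(\cal T)\subset \mathbb P P(\cal D)$, and taking closures gives
\[
\overline{\cal T}=\overline{\mathbb P P(\cal T)}\subset \overline{\mathbb P P(\cal D)},
\]
which is the statement. Finally, using the preceding Theorem one can transport the same inclusion across to the length-spectrum compactification of $\cal D$, so the corollary holds in both models simultaneously.

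The main obstacle is the matching in the first step: one has to check carefully that the abstract Patterson--Sullivan construction of Section \ref{geodesiccurrents} specializes on $\cal T$ to the classical Liouville current, and that the projective classes depend continuously on $\rho$ in the weak${}^*$ topology so that limits in $\cal T$ inside $\mathbb P\cal G$ really coincide with Thurston limits. Everything else is then a routine passage to closures along the inclusion $\cal T\hookrightarrow \cal D$.
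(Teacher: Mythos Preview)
Your proposal is correct, but the route differs from the paper's. You reduce everything to identifying $\mathbb P P|_{\cal T}$ with Bonahon's Liouville embedding and then cite Bonahon's compactification theorem as a black box. The paper instead argues directly with the intersection form just introduced: if $m_i\in\cal T$ diverges and $\lambda_i\mu_{m_i}\to\alpha$ in $\cal G$, then properness forces $\lambda_i\to 0$, and using the normalization $i(\mu_{m_i},\mu_{m_i})=\frac{\pi}{2}\mathrm{Area}(m_i)$ together with Gauss--Bonnet (so that $\mathrm{Area}(m_i)=2\pi|\chi(S)|$ is constant), one gets $i(\alpha,\alpha)=\lim\lambda_i^2\cdot\frac{\pi}{2}\cdot 2\pi|\chi(S)|=0$; hence $\alpha$ is a measured lamination.

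The trade-offs are as follows. The paper's computation is more self-contained within the section and sidesteps precisely the verification you flag as the main obstacle, namely that the abstract Patterson--Sullivan current for a hyperbolic structure agrees projectively with the Liouville current; it only needs the area normalization, which holds for the currents $\mu_m$ as stated. Your approach, on the other hand, yields the full identification $\overline{\mathbb P P(\cal T)}=\overline{\cal T}$ (including surjectivity onto $PML$ and the matching of topologies) in one stroke, whereas the paper's self-intersection argument literally only shows that boundary points of $\overline{\mathbb P P(\cal T)}$ lie in $PML$ and implicitly relies on the preceding Theorem (the homeomorphism with the length-spectrum compactification) to recover all of Thurston's boundary.
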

\begin{proof}
When restricted to $\cal T$, if $\lambda_i\mu_{m_i}\ra \alpha$ in $\cal G$ and the projective structures $m_i$ diverge,
then by the properness of the map $\cal T\ra \cal G$, $\lambda_i\ra 0$ and
$$i(\alpha,\alpha)=\lim_{i\ra\infty} i(\lambda_i\mu_{m_i},\lambda_i\mu_{m_i})=\lim_{i\ra\infty}\lambda_i^2 i(\mu_{m_i},\mu_{m_i})=\lim_{i\ra\infty}\lambda_i^2 \frac{\pi}{2}2\pi|\chi(S)|=0.$$ Hence $\alpha$ must be a measured lamination since
 measured laminations are characterized by self-intersection number being zero.
\end{proof}
In this way, we see again Thurston's compactification sitting inside the compactification of real projective structures.
We believe that whenever the Hilbert metric area of a diverging sequence is bounded above, the sequence will converge to a projective measured lamination.

%
%Then the measure on $T^1M$ defined by
%$$ dL_m=L_m\times dt$$ where $dt$ is a Lebegue measure along the geodesic
%flow, is a geodesic flow invariant measure on the unit tangent
%bundle of $(M,m)$ where $m$ denotes the Hilbert metric associated to
%the projective structure $m$. Then one can show that
%$$e^{2\delta_m(\xi,\eta)_x} d\mu^m_x(\xi)d\mu^m_x(\eta)dt=dL_m$$ where
%$d\mu^m_x$ is the Patterson-Sullivan measure on $S^1$ based at $x\in
%\Omega_m$. Here the strictly convex open domain $\Omega_m$ is the
%universal cover of $(M,m)$.
%
%In \cite{kim1}, it is shown that the marked length spectrum
%determines the cross-ratio on the ideal boundary $S^1$. Then since
%the geodesic current defined in (\ref{geodesiccurrent}) is
%determined by the cross-ratio, equivalently by its marked length
%spectrum, the map
%$$m\ra L_m$$ is at most two to one map by \cite{kim1}.

\end{document}